\newcommand{\supnorm}[1]{\left\lVert #1 \right\rVert_{\infty}}
\newcommand{\pnorm}[2]{\left\lVert #2 \right\rVert_{#1}}
\newtheorem{theorem}{Theorem}[section]
\newtheorem{corollary}{Corollary}[theorem]
\newtheorem{lemma}[theorem]{Lemma}
\title{A Non-Conservative, Non-Local Approximation of the Burgers Equation}
\author{
	Shyam Sundar Ghoshal\thanks{Centre for Applicable Mathematics, Tata Institute of Fundamental Research, e-mail: ghoshal@tifrbng.res.in, venkatesh2020@tifrbng.res.in} \and
	Parasuram Venkatesh\footnotemark[1] \and
	Emil Wiedemann\thanks{Department of Mathematics, Friedrich-Alexander-Universit\"at Erlangen-N\"urnberg, e-mail: emil.wiedemann@fau.de}
}
\date{}
\begin{document}
	\maketitle
	\begin{abstract}
		The analysis of non-local regularisations of scalar conservation laws is an active research program. Applications of such equations are found in the modelling of physical phenomena such as traffic flow. In this paper, we propose an inviscid, non-local regularisation in non-divergence form. The salient feature of our approach is that we can obtain sharp a priori estimates on the total variation and supremum norm, and justify the singular limit for Lipschitz initial data up to the time of catastrophe. For generic conservation laws, this result is sharp, since we can demonstrate \textit{non}-convergence when the initial data features simple discontinuities. Conservation laws with linear flux derivative, such as the Burgers equation, behave better even in the presence of discontinuities. Hence, we devote special attention to the limiting behaviour of non-local solutions with respect to the Burgers equation for a simple class of initial data.
		
		\smallskip
		
		\noindent\textbf{Keywords:} scalar conservation laws, non-local burgers equation, entropy solutions, front-tracking, convergence to the local model.
		
		\smallskip
		
		\noindent\textbf{MSC2020:} primary 35B44, 35B65, 35L65, 35L67; secondary 35D30, 35L03.
	\end{abstract}
	
	\tableofcontents
	
	\section{Introduction}
	The Burgers equation is a canonical instance of a scalar conservation law. It is often used as a simple example to motivate the study of non-linear hyperbolic PDEs, since it exhibits many of the characteristic features of such equations. In conservative form, it is cast as a Cauchy problem on the domain $[0,\infty)\times\mathbb{R}$ as follows:
	\begin{equation}\label{burger}
		\begin{split}
			\partial_tu+\partial_x\left(\dfrac{u^2}{2}\right)&=0, \\
			u(0,x)&=u_0(x),
		\end{split}
	\end{equation}
	for $u_0\in L^{\infty}(\mathbb{R})$. Generic solutions, even for smooth initial data, break down in finite time, and therefore motivate the notion of \textit{weak} solutions to \eqref{burger}, defined as $u\in C([0,\infty);L^1_{loc}(\mathbb{R}))$ such that, for all $\varphi\in C_c^{\infty}(\mathbb{R}\times[0,\infty))$:
	\[
	\int_{0}^{\infty}\int_{-\infty}^{\infty}u(t,x)\partial_t\varphi(t,x)+\dfrac{u(t,x)^2}{2}\partial_x\varphi(t,x)dxdt+\int_{-\infty}^{\infty}u_0(x)\varphi(0,x)dx=0.
	\]
	However, weak solutions to such initial value problems are not unique. A further condition is imposed on the class of weak solutions to any particular Cauchy problem to single out a unique solution. In the context of scalar conservation laws, the admissibility criterion is that the weak solution must satisfy a further inequality of the form
	\[
	\partial_tn(u)+\partial_xq(u)\leq0,
	\]
	in the sense of distributions, where $n(u)$ is a $C^1$, convex function of its argument, and $q(u)$ is an antiderivative of $n^{\prime}(u)u$ (for the Burgers equation). Such a pair of functions $n,q$ is called an entropy-entropy flux pair with respect to the PDE. By taking $n(u)=\pm u$, we can see that every entropy solution is necessarily an admissible weak one as well. Another equivalent criterion is to insist that they arise as the `vanishing viscosity' limits of certain parabolic equations \cite{kruzkov}. In particular, consider the family of solutions, parametrised by $\epsilon>0$, of the parabolic equation(s)
	\begin{equation}\label{burgers parabolic}
		\begin{split}
			\partial_tu^{\epsilon}+\partial_x\left(\dfrac{(u^{\epsilon})^2}{2}\right)&=\epsilon\Delta u^{\epsilon}, \\
			u(0,x)&=u_0(x).
		\end{split}
	\end{equation}
	For each $\epsilon>0$, solutions to \eqref{burgers parabolic} are obtained by a fixed-point argument. The criterion now is that $u$ must be a limit point of the sequence $u^{\epsilon}$ as $\epsilon\to0^{+}$. Using a priori bounds on the total variation and uniform Lipschitz continuity in time with respect to the $L^1$ norm, the sequence $u^{\epsilon}$ is shown to be compact, and it is demonstrated that the limit is an entropy solution $u$ of the Cauchy problem \eqref{burger} with the given initial data \cite{kruzkov}. Uniqueness of the limit, and therefore justification of the entropy/vanishing viscosity criterion, is proved in a separate argument.
	
	This method uses a second-order parabolic PDE, known the viscous Burgers equation due to the Laplacian. Since this second-order term has a coefficient of $\epsilon$ that vanishes in the limit, this approach is known as the method of `vanishing viscosity'. In this paper, we propose an alternative first-order approximation schema for the Burgers equation using non-linear transport equations. Our motivation is as follows: For smooth and uniformly Lipschitz initial data, a local-in-time solution for \eqref{burger} can be obtained by the method of characteristics. Applying the chain rule to the equation, we see that Lipschitz solutions satisfy
	\[
	\partial_tu+u\partial_xu=0.
	\]
	Thus, the solution is constant along characteristics, which in this case are straight line in spacetime with slope determined by the value of $u_0$ at the base point (at $t=0$). This can only be done locally, because the characteristic curves for generic solutions will meet in finite time. Avoiding the formation of such discontinuities, or `shocks', is the motivation for our approximation schema.
	
	There are two obvious ways to regularise the Burgers equation by convolution, depending on whether one uses the conservative or the transport formulation. Thus, the non-linear term could be replaced either by $\frac12\partial_x([\eta_{\epsilon}\ast u]u)$ or by $(\eta_{\epsilon}\ast u)\partial_{x}u$. We choose the latter, indeed for fixed $\epsilon>0$, we will analyse the following non-conservative, non-local Cauchy problem:
	\begin{equation}\label{nonconservative nonlocal}\tag{NN}
		\begin{split}
			\partial_tu^{\epsilon}+(\eta_{\epsilon}\ast u^{\epsilon})\partial_{x}u^{\epsilon}&=0; \\
			u^{\epsilon}(0,x)&=u_0(x).
		\end{split}
	\end{equation}
	Here, and in the rest of this paper, ${\eta_{\epsilon}}$ is a symmetric family of mollifiers, parametrised by $\epsilon>0$, with compact support $[-\epsilon,\epsilon]$. For instance, consider a smooth, symmetric, positive function $\eta(x)$ supported in $[-1,1]$ such that $\pnorm{1}{\eta}=1$, and for any $\epsilon>0$ let $\eta_{\epsilon}=\epsilon^{-1}\eta(\epsilon^{-1}x)$. For example, consider
	\[
	\eta(x)=I^{-1}\exp\left(\dfrac{-1}{1-x^2}\right)\chi_{(-1,1)}(x),
	\]
	where $\chi_{(-1,1)}(x)$ is the indicator function of the interval $(-1.1)$ and $I$ is a constant that ensures normalisation. We will consider such families $\{\eta_{\epsilon}\}$, though the particular form of $\eta$ is not important as long as it is symmetric with compact support in $[-1,1]$. In some cases, the symmetry condition can also be relaxed. In the singular limit as $\epsilon\to0^+$, convolution with members of this family approximates identity.
	
	Although \eqref{nonconservative nonlocal} is specifically a regularisation of the Burgers equation, we will also briefly consider appropriate non-conservative, non-local regularisations of general scalar conservation laws in section \ref{smooth regime}. As we shall see, however, the Burgers case is special with respect to discontinuous initial data, and therefore singled out for special attention. It is also simpler to work with \eqref{nonconservative nonlocal}, and since the well-posedness results trivially carry over, mutatis mutandis, to the appropriate non-conservative, non-local regularisation of any other scalar conservation law, we can work with this equation without any loss of generality.
	
	We exploit the transport equation structure of \eqref{nonconservative nonlocal} to prove well-posedness by a fixed-point argument and obtain further a priori estimates on solutions. After demonstrating well-posedness for $C^1$ data by a fixed-point argument, we introduce the notion of weak solutions for \eqref{nonconservative nonlocal} and generalise the well-posedness result to $BV$ data, using the a priori estimates derived in the previous step. One advantage of our non-conservative regularisation is that we can prove that the $L^{\infty}$ norm and $BV$ seminorm are preserved for positive times, which greatly simplifies our analysis.
	
	Since the paper of Zumbrun \cite{Zum}, there has been a steady output of results on non-local conservation and balance laws, where spatial mollification is used in place of small viscosity to gain and/or preserve regularity. In all nonlocal approximations of the local models, the main object of interest is to find out when the formal singular limit to the Dirac delta distribution can justified rigorously. Investigations, both numerical and theoretical, have been saliently focused on models of traffic flow \cite{multilane,lwrnonlocal,lwrnonlocalwp}. Other applications include sedimentation \cite{sed}, opinion-formation \cite{opform}, supply chains \cite{supply}, and more. On the theoretical side existence, uniqueness \cite{survey,bvkernel,Crippa2013}, and control problems \cite{boundary,control} have been studied.
	
	The regularisation \eqref{nonconservative nonlocal} was first studied by Norgard and Mohseni, who dubbed it the ``convective filtered Burgers equation" \cite{Norgard_2008}. In another paper, they derived singular limit results for this equation for a class of continuously differentiable initial data \cite{ConFil}. The regularisation of the Burgers equation in non-conservative form was also independently studied by Coron et al. \cite{NLtransport}, along with analogous regularisations of general scalar conservation laws. Our approach is closer to that of the latter, but differs slightly from both; we obtain the singular limit for a larger class of initial data and also generalise the framework to the multidimensional case. Non-local transport equations were also studied in higher dimensions by De Lellis et al. \cite{NLsource}, but in their case the non-locality was introduced through the source term, whereas our approach regularises the velocity field and introduces non-linearity by making the field depend on the solution itself.
	
	In general, the differential equation remains in divergence form even at the non-local level, and the flux function is applied to the regularisation. In general, the non-locality does not behave well in the presence of shocks; one feature of our work is that we can work explicitly with discontinuous initial data, and justify the formal convergence to the entropy solution in the context of Burgers equation for a special class of initial data. Our choice of framework also allows us to avoid a set of counter-examples to convergence (see section \ref{conclusion}), at least locally in time, that were obtained by Colombo et. al. \cite{counters}, where they worked with the nonlocal equation
	\begin{equation}\label{conservative nonlocal}
		\partial_tu+\partial_x((\eta_{\epsilon}\ast u)u))=0,
	\end{equation}
	and proved non-convergence for some initial values. In contrast, we can justify the formal limit as $\epsilon\to0^+$ for any Lipschitz initial data, at least locally in time.
	
	Further results were obtained in the framework of a completely asymmetric, `anisotropic' convolution kernel in \cite{anisotropic}, where convergence to the local limit was obtained under the assumption of a one-sided Lipschitz condition on the local limit. In this paper, however, we will restrict ourselves to symmetric convolution kernels approximating identity.
	
	\subsection{Main results}
	Let us colloquially summarise our main results:
	\begin{itemize}
		\item For smooth data and fixed regularisation parameter $\epsilon>0$, the non-conservative nonlocal Burgers equation has a unique global smooth solution (Theorem~\ref{classical cauchy});
		\item For $BV$ initial data, there exists a unique global weak solution, and no entropy condition is required for uniqueness (Theorem~\ref{weaksol} and Lemma~\ref{L1 stability});
		\item As long as the entropy solution to the local conservation law remains Lipschitz, the non-conservative, non-local approximation(s) with the same initial datum converge uniformly, as $\epsilon\to0$, to the former (Theorem~\ref{smoothconv});
		\item For the Riemann problem, there are examples of convergence but also of non-convergence of the non-local approximations to the local entropy solution (Lemma~\ref{Riemann Problem}).
	\end{itemize}
	Convergence in the Lipschitz regime (Theorem~\ref{smoothconv}) most clearly brings out the improvement of our regularisation schema, where we can justify the formal singular limit with a simple proof, with much weaker assumptions on the regularity of the local limit. By contrast, the proof in Zumbrun \cite{Zum} requires the local solution to be $C^4$ for the local-in-time convergence result. For the non-conservative regularisation in particular, this is also an improvement over the convergence obtained for quasi-concave initial data in \cite{NLtransport}, albeit only locally in time. Thus, in the Lipschitz regime where the weak solutions satisfies the partial differential equation point-wise almost everywhere, our results are sharp. We also do not require the special form of Burgers equation for this result.
	
	Another feature that stands out in our framework is the impossibility of total variation blow-up, as in \cite{tvblowup}. Given the transport structure of our equation, the non-local solutions preserve the total variation of the initial data for all positive times (Lemma~\ref{TVP}).
	
	Generating total variation bounds from a transport structure is not new. For the conservative regularisation schema, this method was used to obtain bounds for the non-local term, which then translated to compact embedding of the solution sequence itself under certain assumptions on the kernel \cite{ccdnkp}, in particular it is highly non-symmetric. One of the questions posed in the paper is precisely the convergence or lack thereof in the presence of a symmetric kernel. As shown in \cite{monodatanonlocal}, conservative non-local regularisation with a symmetric kernel cannot in general satisfy the maximum principle, and total variation may blow up as in \cite{tvblowup}, but in our schema both these possibilities are ruled out. Thus, if convergence must fail, it must fail in some \textit{other} way.
	
	For monotone initial data, we nearly recover the convergence results established in the conservative regularisation framework \cite{gottlich,monodatanonlocal}. If the data is non-decreasing \textit{and Lipschitz}, or non-increasing \textit{and piece-wise constant}, then we can justify the formal singular limit in arbitrary, compact intervals of time. Of course, this is only true with respect to regularisations of the Burgers equation or its rescaled variants; trivial counterexamples to convergence in the case of piece-wise constant, monotone decreasing data can be derived from Lemma~(\ref{general riemann}) otherwise. 
	
	\subsection{Structure of the paper}
	The structure of this paper is as follows: in subsection \ref{c1wp} we demonstrate well-posedness of the Cauchy problem for $C^1$ initial data with fixed $\epsilon>0$, and derive a priori estimates on the $L^{\infty}$ norm, along with the Lipschitz and Total Variation semi-norms. Then, in subsection \ref{weaksol}, we define weak solutions to \eqref{nonconservative nonlocal} and obtain well-posedness for less regular BV initial data using the a priori estimates derived before. We present the multidimensional non-conservative regularisation for scalar conservation laws with general flux in subsection \ref{multidim}. In section \eqref{singlimit}, we justify passage to the singular limit for some classes of initial data, and also provide examples of data for which convergence to the entropy solution does not take place. Section \ref{isen} discusses a simple extension to the case of the isentropic Euler system with cubic pressure law, which is known to have some nice properties that simplify the analysis. Finally, section \ref{conclusion} concludes.
	
	\section{Well-posedness of the Cauchy problem}
	\subsection{Smooth regime}\label{c1wp}
	In this section we analyse the Cauchy problem for $C^1$ initial data and derive some a priori estimates on solutions. For $T>0$, let $\Omega_T=[0,T]\times\mathbb{R}$; we will use this notation throughout the paper to denote such domains. By $\operatorname{Lip}(\mathbb R)$ we denote the space of all functions in $L^\infty(\mathbb R)$ that are globally Lipschitz continuous; in other words, $\operatorname{Lip}(\mathbb R)=W^{1,\infty}(\mathbb R)$.
	\begin{theorem}\label{classical cauchy}
		For $u_0\in C^1(\mathbb{R})\cap\operatorname{Lip}(\mathbb{R})$, the Cauchy problem \eqref{nonconservative nonlocal} has a unique $C^1$ solution in the domain $\Omega_T$ for arbitrary $T>0$.
	\end{theorem}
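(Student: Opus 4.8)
The plan is to recast \eqref{nonconservative nonlocal} as a fixed-point problem for the characteristic flow of the mollified velocity field, solve it locally by a contraction argument, bootstrap the regularity of the resulting fixed point to $C^1$, and then extend to all of $\Omega_T$ by means of a priori bounds that rule out blow-up of the $C^1$ norm. Throughout, $\epsilon>0$ is fixed, so we may freely use that convolution with $\eta_\epsilon$ maps $L^\infty(\mathbb{R})$ boundedly into $C^k_b(\mathbb{R})$ for every $k$, with $\supnorm{\partial_x^k(\eta_\epsilon\ast v)}\le\pnorm{1}{\eta_\epsilon^{(k)}}\supnorm{v}$; this is the only structural input, and it will be visible that all constants degenerate like powers of $1/\epsilon$, consistent with the singular limit being the delicate matter treated later.

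First I would set up the linearised problem. Put $R:=\supnorm{u_0}$ and, for $v\in C([0,T];C_b(\mathbb{R}))$ with $\supnorm{v(t,\cdot)}\le R$ for all $t$, let $a_v:=\eta_\epsilon\ast v$; then each $a_v(t,\cdot)$ is smooth with $\supnorm{a_v(t,\cdot)}\le R$ and Lipschitz constant at most $L_\epsilon:=\pnorm{1}{\eta_\epsilon^{\prime}}R$, uniformly in $t$, and $t\mapsto a_v(t,\cdot)$ inherits the continuity of $v$. The characteristic ODE $\dot X=a_v(t,X)$ then has a globally defined flow $\Phi^{v}_{s\to t}$, and I define $\mathcal T[v](t,x):=u_0\big(\Phi^{v}_{t\to 0}(x)\big)$, the unique solution of the linear transport equation $\partial_t u+a_v\,\partial_x u=0$, $u(0,\cdot)=u_0$. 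Since $\mathcal T[v](t,\cdot)$ takes values in $\overline{u_0(\mathbb{R})}$, the ball $B_R:=\{v:\supnorm{v(t,\cdot)}\le R\ \forall t\}$ is mapped into itself. For $v_1,v_2\in B_R$ one has $\supnorm{a_{v_1}(t,\cdot)-a_{v_2}(t,\cdot)}\le\supnorm{v_1(t,\cdot)-v_2(t,\cdot)}$, and a Gronwall comparison of the two flows gives $\supnorm{\Phi^{v_1}_{t\to0}-\Phi^{v_2}_{t\to0}}\le t\,e^{L_\epsilon t}\sup_{[0,t]}\supnorm{v_1-v_2}$, hence $\sup_{[0,T]}\supnorm{\mathcal T[v_1]-\mathcal T[v_2]}\le \supnorm{u_0^{\prime}}\,T\,e^{L_\epsilon T}\sup_{[0,T]}\supnorm{v_1-v_2}$. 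For $T_0=T_0(\epsilon,\supnorm{u_0},\supnorm{u_0^{\prime}})$ small enough this is a contraction on $C([0,T_0];C_b(\mathbb{R}))\cap B_R$, producing a unique $u$ with $u=\mathcal T[u]$, i.e. $u=u_0\circ\Phi^{u}_{t\to0}$.

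Next I would upgrade the regularity of this fixed point and extract the a priori estimates. From $u\in B_R$ it follows that $a_u=\eta_\epsilon\ast u\in C([0,T_0];C^\infty_b(\mathbb{R}))$ with all $x$-derivatives bounded uniformly in $t$; moreover $u(t,\cdot)=u_0\circ\Phi^{u}_{t\to0}$ is Lipschitz in both variables (the flow is, since $a_u$ is bounded and Lipschitz in $x$), so $a_u$ is Lipschitz in $(t,x)$ as well. Standard ODE dependence then makes $\Phi^{u}_{0\to t}$, and hence its spatial inverse $\Phi^{u}_{t\to0}$, jointly $C^1$ in $(t,x)$; since $u_0\in C^1(\mathbb{R})$, the composition $u=u_0\circ\Phi^{u}_{t\to0}$ is in $C^1(\Omega_{T_0})$, and differentiating along characteristics shows it solves \eqref{nonconservative nonlocal} pointwise. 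The same representation gives $\supnorm{u(t,\cdot)}\le\supnorm{u_0}$ for all $t$, and, writing $w=\partial_x u$, the identity $\partial_t w+a_u\partial_x w=-(\eta_\epsilon^{\prime}\ast u)\,w$ integrated along characteristics yields $\supnorm{w(t,\cdot)}\le\supnorm{u_0^{\prime}}\,e^{L_\epsilon t}$. Thus on any $[0,T]$ the quantity $\supnorm{u(t,\cdot)}+\supnorm{\partial_x u(t,\cdot)}$ stays below an explicit bound depending only on $T$, $\epsilon$ and $\supnorm{u_0}+\supnorm{u_0^{\prime}}$.

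Finally, global existence and uniqueness. Because the local existence time $T_0$ depends on the data only through $\supnorm{u_0}+\supnorm{u_0^{\prime}}$, and because the previous step bounds this quantity at time $t$ by a continuous (at most exponentially growing) function of $t$, a solution defined on a maximal interval $[0,T^{*})$ with $T^{*}<\infty$ could be restarted from times near $T^{*}$ for a \emph{fixed} positive additional time, contradicting maximality; hence the solution exists on $\Omega_T$ for every $T>0$. Uniqueness follows since any $C^1$ solution $u$ automatically satisfies $\supnorm{u(t,\cdot)}\le\supnorm{u_0}$ and, by the characteristic computation, $u=u_0\circ\Phi^{u}_{t\to0}$, so it is a fixed point of $\mathcal T$ in $B_R$ on each short time slab; the contraction property forces it to coincide with the constructed solution. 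I expect the two points needing care to be the bootstrap — verifying that the $C^0$ fixed point is genuinely $C^1$, which rests on the time-Lipschitz regularity of $a_u$ together with the smoothing by $\eta_\epsilon$ — and the argument that the continuation time does not shrink to zero; the latter is the main thing to get right, and is exactly what the transport structure (no finite-time gradient blow-up) is there to secure.
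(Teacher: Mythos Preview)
Your proposal is correct and follows essentially the same route as the paper: a Banach fixed-point argument on the linearised transport problem (the paper writes the map $g\mapsto u$ at the PDE level, you write it explicitly via the characteristic flow, but these are the same object), a maximum principle to stay in the ball $B_R$, a contraction estimate for short time using the Lipschitz constant of $u_0$ together with the smoothing of $\eta_\epsilon$, and then the a priori bound $\supnorm{\partial_x u(t,\cdot)}\le\supnorm{u_0'}\,e^{L_\epsilon t}$ to run the standard continuation argument and rule out finite-time blow-up. If anything, your write-up is more explicit than the paper about the $C^1$ bootstrap of the fixed point and about why the restart time near a putative maximal $T^*$ is bounded below; both are handled correctly.
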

	\begin{proof}
		Let $g\in L^{\infty}(\Omega_T),u_0\in C^1(\mathbb{R})\cap\operatorname{Lip}(\mathbb{R})\cap L^{\infty}(\mathbb{R})$, and consider the Cauchy problem for the following linear transport equation:
		\begin{equation}\label{linear transport}
			\begin{split}
				\partial_tu+(\eta_{\epsilon}\ast g)\partial_{x}u&=0, \\
				u(0,x)&=u_0(x).
			\end{split}
		\end{equation}
		This Cauchy problem has a unique solution $u$. Define an operator $\Lambda$ that maps $g\in L^{\infty}([0,T]\times\mathbb{R})$ to the unique $u$ solving \eqref{linear transport}. For small enough time $T^*$, we can show that $\Lambda$ is a contraction with respect to the $L^{\infty}$ norm. Then by the Banach fixed point theorem, we can prove the existence of a solution to the Cauchy problem \eqref{nonconservative nonlocal} with data $u_0\in C^1(\mathbb{R})\cap\operatorname{Lip}(\mathbb{R})$.
		
		Given any $g\in L^{\infty}(\Omega_T)$, \eqref{linear transport} becomes a linear hyperbolic initial value problem that can be solved by the method of characteristics. Furthermore, the spatial regularity of the initial data is preserved at positive times. Since $g\in L^{\infty}$, $(\eta_{\epsilon}\ast g)$ is uniformly Lipschitz in $x$ at all non-negative times, and hence the characteristics, i.e the integral curves of $(\eta_{\epsilon}\ast g)$, are well-defined. Thus $\Lambda g=u$ at any given time level is some transportation of the initial data along suitable integral curves. In particular, let $(t,x)\in\Omega_T$. Let $y_{t,x}:[0,t]\to\mathbb{R}$ denote the solution of
		\begin{equation}\label{characteristic ODE}
			\begin{split}
				\dot{y}_{t,x}(s)&=[\eta_{\epsilon}\ast g](s,y_{t,x}(s)), \\
				y_{t,x}(t)&=x.
			\end{split}
		\end{equation}
		Since $u(t,\cdot)$ is a transportation of the initial data along characteristic curves, we have that $u(t,x)=u_0(y_{t,x}(0))$, thus justifying our assertion that the spatial regularity of $u_0$ is preserved for positive times. More precisely, $\|\partial_x u\|_{L^\infty(\Omega_T)}$ is finite and depends only on $\|u_0\|_{C^1}$ and $\|g\|_{L^\infty(\Omega_T)}$.
		
		Additionally, we have a maximum principle for $\eqref{linear transport}$, which we will show also hold for \eqref{nonconservative nonlocal} later. This maximum principle allows us to restrict the operator $\Lambda$, viewing it as a map $\overline{B_R(0)}\to \overline{B_R(0)}$ for $R:=\|u_0\|_{L^\infty}$, where the ball is measured in the norm of $L^\infty(\Omega_T)$.
		
		Consider $g,h\in L^{\infty}(\Omega_T)$ and let $u=\Lambda g,v=\Lambda h$. Transport equations preserve regularity of the initial data, so $u,v\in C^1(\Omega_T)$. If we define $w:=u-v$, then $w$ solves the differential equation
		\[
		\partial_tw+(\eta_{\epsilon}\ast g)\partial_{x} w+(\eta_{\epsilon}\ast(g-h))\partial_{x}v=0.
		\]
		Hence, we have the a priori estimate
		\begin{equation}\label{a priori sup bound}
			\supnorm{w(T^*,\cdot)}\leq\supnorm{g-h}\supnorm{\partial_{x}v}T^*.
		\end{equation}
		As noted before, $\supnorm{\partial_{x}v}$ depends only on $\|h\|_\infty\leq R$ and $\|u_0\|_{C^1}$, both of which are fixed from the initial datum. Given $v$ and for small enough $T^*>0$, \eqref{a priori sup bound} proves that $\Lambda$ is a contraction in $L^{\infty}(\Omega_{T^*})$, so we can deduce the existence of a unique fixed point of $\Lambda$ in $L^{\infty}(\Omega_{T^*})$, which exactly corresponds to a local-in-time solution of the Cauchy problem \eqref{nonconservative nonlocal} with initial data $u_0$.
		
		Now we only need an a priori bound on the Lipschitz constant of the solution at positive times in order to turn this into a global existence result, drawing on the well-known `continuous induction' argument as elaborated in \cite{Hormander}. For a fixed $\epsilon>0$ and $g\in L^{\infty}(\mathbb{R})$, $(\eta_{\epsilon}\ast g)$ is Lipschitz, and in particular by the mean value theorem,
		\[
		\abs{\eta_{\epsilon}\ast g(x_1)-\eta_{\epsilon}\ast g(x_2)}\leq\int_{-\infty}^{\infty}\abs{g(y)}\abs{\eta_{\epsilon}(x_1-y)-\eta_{\epsilon}(x_2-y)}dy\leq\supnorm{g}C_{\epsilon}\abs{x_1-x_2}
		\]
		for any $x_1,x_2\in\mathbb{R}$, where $C_{\epsilon}$ is the Lipschitz constant for $\eta_{\epsilon}$. Solutions are transported along characteristics, and by the maximum principle $\eta_{\epsilon}\ast u$ is uniformly Lipschitz in $x$ for all non-negative times, so $Y_{T}:\mathbb{R}\to\mathbb{R}$ defined as $Y_T(x):=y_{T,x}(0)$ via \eqref{characteristic ODE} is Lipschitz with constant $\exp({\supnorm{u_0}C_{\epsilon}T})$. Hence,
		\[
		\abs{u(T,x_1)-u(T,x_2)}=\abs{u_0(Y_{T}(x_1))-u_0(Y_T(x_2))}\leq\operatorname{Lip}(u_0)\exp(\supnorm{u_0}C_{\epsilon}T)\abs{x_1-x_2},
		\]
		which gives us the required a priori bound on the spatial Lipschitz constant of the solution for positive times. Hence, the spatial derivative of $u$ cannot blow up in finite time, and therefore we have a unique $C^1$ solution to the initial value problem \eqref{nonconservative nonlocal} in $\Omega_T$ for $C^1$, Lipschitz initial data. $T>0$ is arbitrary, so we are done.
	\end{proof}
	The following lemma will be useful later. Throughout this paper, we will slightly abuse notation and denote the total variation by $\pnorm{TV}{\cdot}$, despite the fact that it is only a semi-norm.
	\begin{lemma}\label{TVP}
		Let $u$ be a solution to the Cauchy problem \eqref{nonconservative nonlocal} with $u_0\in BV(\mathbb{R})\cap C^{1}(\mathbb{R})\cap\operatorname{Lip}(\mathbb{R})$. Then, for all $t\geq0:u(t,\cdot)\in BV(\mathbb{R})$ and $\pnorm{TV}{u(t,\cdot)}=\pnorm{TV}{u_0}$, i.e., we have a Total Variation Preservation property.
	\end{lemma}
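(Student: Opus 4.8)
The plan is to leverage the Lagrangian representation of the solution established in the proof of Theorem~\ref{classical cauchy}: writing $Y_t(x):=y_{t,x}(0)$ for the foot at time $0$ of the characteristic through $(t,x)$, one has $u(t,x)=u_0(Y_t(x))$. Since total variation is invariant under precomposition with a strictly monotone bijection of $\mathbb R$, it will suffice to verify that $Y_t:\mathbb R\to\mathbb R$ has that property for every $t\ge0$.

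First I would record that the characteristic ODE \eqref{characteristic ODE} is globally well-posed on $[0,T]$. By the maximum principle $\supnorm{u(t,\cdot)}\le\supnorm{u_0}$, so the velocity field $\eta_\epsilon\ast u$ is bounded on $\Omega_T$ (hence characteristics have at most linear growth and cannot blow up in finite time) and, for fixed $\epsilon$, uniformly Lipschitz in $x$ with constant $\supnorm{u_0}C_\epsilon$; as $u\in C^1(\Omega_T)$ the field is also continuous in time. Thus through every point of $\Omega_T$ there is a unique characteristic defined on all of $[0,T]$. Uniqueness forbids two distinct characteristic curves from crossing, so the sign of $x_1-x_2$ is preserved along the flow; in particular $x_1<x_2$ implies $Y_t(x_1)<Y_t(x_2)$, so $Y_t$ is strictly increasing, hence injective. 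For surjectivity, given $z\in\mathbb R$ let $X_t(z)$ be the value at time $t$ of the characteristic issuing from $(0,z)$; then $Y_t(X_t(z))=z$. (The a priori bound from the proof of Theorem~\ref{classical cauchy} moreover shows $Y_t$ and $X_t$ are Lipschitz with constant $\exp(\supnorm{u_0}C_\epsilon t)$, so $Y_t$ is in fact a bi-Lipschitz homeomorphism, though only the order-preserving bijection property is used below.)

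Next I would conclude using the partition characterisation of total variation. For any finite partition $x_0<x_1<\dots<x_n$, the points $Y_t(x_0)<Y_t(x_1)<\dots<Y_t(x_n)$ again form a partition, so
\[
\sum_{i=0}^{n-1}\abs{u(t,x_{i+1})-u(t,x_i)}=\sum_{i=0}^{n-1}\abs{u_0(Y_t(x_{i+1}))-u_0(Y_t(x_i))}\le\pnorm{TV}{u_0},
\]
and taking the supremum over partitions gives $u(t,\cdot)\in BV(\mathbb R)$ with $\pnorm{TV}{u(t,\cdot)}\le\pnorm{TV}{u_0}$. Conversely, since $Y_t$ is onto, every partition $z_0<\dots<z_n$ of $\mathbb R$ is the image under $Y_t$ of the partition $Y_t^{-1}(z_0)<\dots<Y_t^{-1}(z_n)$, so the associated variation sum for $u_0$ equals one for $u(t,\cdot)$ and is therefore $\le\pnorm{TV}{u(t,\cdot)}$; taking the supremum yields the reverse inequality. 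Hence $\pnorm{TV}{u(t,\cdot)}=\pnorm{TV}{u_0}$ for all $t\ge0$.

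The only step that requires care is the first — establishing that the backward characteristic map $Y_t$ is a globally defined, order-preserving bijection of the line; the total variation identity is then purely combinatorial. The two facts that make it work, namely global boundedness of the non-local velocity field (ruling out finite-time blow-up of characteristics) and uniqueness of solutions to \eqref{characteristic ODE} (ruling out crossing of characteristics), are already available from the proof of Theorem~\ref{classical cauchy}.
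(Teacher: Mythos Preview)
Your proof is correct and follows essentially the same approach as the paper: both establish $u(t,\cdot)=u_0\circ Y_t$ with $Y_t$ an order-preserving bijection of $\mathbb{R}$ (you via non-crossing of characteristics and the forward/backward flow correspondence, the paper more implicitly), and then derive the two inequalities from the partition definition of total variation. The only cosmetic difference is that for the reverse inequality the paper pushes a near-optimal partition for $u_0$ forward along characteristics, whereas you pull back an arbitrary partition via $Y_t^{-1}$; these are equivalent formulations of the same idea.
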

	\begin{proof}
		We exploit the fact that $u$ is a fixed point of the mapping $\Lambda$ as defined in Theorem \ref{classical cauchy} and therefore the solution to a transport equation. Let the characteristics of $u$ be denoted by the curves $y_{t,x}$ in \eqref{characteristic ODE}. Let $T>0$, and consider any increasing sequence of real numbers $x_1,\ldots,x_n$. Since $u$ is preserved along its own characteristics, we have that
		\[
		\sum_{i=1}^{n-1}\abs{u(t,x_{i+1})-u(t,x_i)}=\sum_{i=1}^{n-1}\abs{u_0(y_{t,x_{i+1}}(0))-u_0(y_{t,x_i}(0))}\leq\pnorm{TV}{u_0}.
		\]
		Taking supremum over all such partitions yields
		\[
		\pnorm{TV}{u(t,\cdot)}\leq\pnorm{TV}{u_0}.
		\]
		To prove the reverse inequality, choose $\delta>0$ and let $x_{1},\ldots,x_m$ be a strictly increasing sequence of real numbers such that 
		\[
		\pnorm{TV}{u_0}-\delta<\sum\abs{u_0(x_{i+1})-u_0(x_i)}.
		\]
		The existence of such a partition is guaranteed by the definition of total variation. Let $y_{i}$ denote the integral curves of $\eta_{\epsilon}\ast u$ that solve the corresponding \textit{initial} value problem(s) $y_i(0)=x_i$. Then for $t>0$, the sequence $y_1(t),\ldots,y_m(t)$ is a strictly increasing sequence of real numbers, and therefore
		\[
		\pnorm{TV}{u(t,\cdot)}\geq\sum_{i=1}^{m-1}\abs{u(t,y_{i+1}(t))-u(t,y_i(t))}=\sum_{i=1}^{m-1}\abs{u_0(x_{i+1})-u_0(x_i)}>\pnorm{TV}{u_0}-\delta.
		\]
		Since $\delta>0$ can be arbitrarily chosen, we are done.
	\end{proof}
	The solution $u$ can also be shown uniformly Lipschitz continuous in time over compact intervals.
	\begin{lemma}\label{ep Lip}
		Solutions of the Cauchy problem \eqref{nonconservative nonlocal} with Lipschitz initial data, for all $T>0$. are also uniformly Lipschitz on the domains $\Omega_T$.
	\end{lemma}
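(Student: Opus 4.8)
The plan is to combine the a priori spatial Lipschitz bound already established in the proof of Theorem~\ref{classical cauchy} with a matching bound on the time derivative, which can be read off directly from the equation. I would first take $u_0\in C^1(\mathbb{R})\cap\operatorname{Lip}(\mathbb{R})$, so that Theorem~\ref{classical cauchy} provides a genuine $C^1$ solution $u$ on $\Omega_T$ satisfying \eqref{nonconservative nonlocal} pointwise everywhere. Fix $T>0$. The characteristic representation $u(t,x)=u_0(y_{t,x}(0))$ used throughout gives the maximum principle $\|u\|_{L^\infty(\Omega_T)}\le\supnorm{u_0}$, and hence, since $\eta_\epsilon$ has unit mass, Young's convolution inequality yields $\|\eta_\epsilon\ast u\|_{L^\infty(\Omega_T)}\le\supnorm{u_0}$. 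The same proof furnishes the spatial bound $\|\partial_x u\|_{L^\infty(\Omega_T)}\le\operatorname{Lip}(u_0)\exp(\supnorm{u_0}C_\epsilon T)=:L_{\epsilon,T}$, coming from the Lipschitz estimate on the backward flow map $Y_T$.

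Next I would use the PDE itself: since $\partial_t u=-(\eta_\epsilon\ast u)\,\partial_x u$ holds pointwise on $\Omega_T$ and $u\in C^1(\Omega_T)$, the two bounds above immediately give $\|\partial_t u\|_{L^\infty(\Omega_T)}\le\supnorm{u_0}\,L_{\epsilon,T}$. Therefore, for any $(t_1,x_1),(t_2,x_2)\in\Omega_T$,
\[
\abs{u(t_1,x_1)-u(t_2,x_2)}\le L_{\epsilon,T}\,\abs{x_1-x_2}+\supnorm{u_0}\,L_{\epsilon,T}\,\abs{t_1-t_2},
\]
so that $u$ is Lipschitz on $\Omega_T$ with constant at most $L_{\epsilon,T}\max\{1,\supnorm{u_0}\}$ (with respect to, say, the $\ell^1$ metric on spacetime). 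Since $T>0$ is arbitrary, this settles the case of $C^1$, Lipschitz data.

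For initial data that is merely Lipschitz (not necessarily $C^1$), I would mollify, choosing $u_0^\delta\in C^1(\mathbb{R})\cap\operatorname{Lip}(\mathbb{R})$ with $\operatorname{Lip}(u_0^\delta)\le\operatorname{Lip}(u_0)$ and $\supnorm{u_0^\delta}\le\supnorm{u_0}$; the corresponding $C^1$ solutions $u^\delta$ obey the spacetime Lipschitz estimate above with a constant independent of $\delta$, and this estimate is inherited by the solution with datum $u_0$ constructed in subsection~\ref{weaksol} by stability of that construction under such limits. I do not expect any essential obstacle here: the only point requiring care is that all the relevant constants depend on the data solely through the quantities $\supnorm{u_0}$ and $\operatorname{Lip}(u_0)$, which do not increase under mollification, so the passage to the limit is automatic.
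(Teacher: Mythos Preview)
Your proposal is correct and follows essentially the same approach as the paper: both use the equation $\partial_t u=-(\eta_\epsilon\ast u)\,\partial_x u$ together with the maximum principle and the spatial Lipschitz bound from Theorem~\ref{classical cauchy} to control $\partial_t u$ uniformly on $\Omega_T$. Your version is in fact slightly more direct (bounding $\|\partial_t u\|_{L^\infty(\Omega_T)}$ in one step rather than via the mean value theorem at the initial time), and you additionally handle merely Lipschitz data by mollification, which the paper omits since the lemma is only invoked in the $C^1$ regime.
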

	\begin{proof}
		It is enough to prove Lipschitz continuity with respect to time, at the initial time in terms of the spatial Lipschitz constant and $L^{\infty}$ norm. Let $h>0$, then by the mean value theorem we have that, for some $\lambda\in(0,1)$:
		\[
		\begin{split}
			\abs{u(h,x)-u_0(x)}&=h\abs{\partial_tu(\lambda h,x)}h \\
			&=\abs{(\eta_{\epsilon}\ast u(\lambda h,x))\partial_xu(\lambda h,x)}h \\
			&\leq\supnorm{\partial_xu(h,\cdot)}\supnorm{u_0}h.
		\end{split}
		\]
		Hence,
		\[
		\dfrac{\abs{u(h,x)-u_0(x)}}{h}\leq\supnorm{\partial_xu(h,\cdot)}\supnorm{u_0}.
		\]
		This concludes the proof.
	\end{proof}	
	Since we have a priori control on $\supnorm{\partial_xu(t,\cdot)}$ for all positive times by \eqref{a priori sup bound}, we can conclude that $u$ is uniformly Lipschitz in compact intervals of time. Using this, we prove Lipschitz continuity with respect to the  $L^1$ norm. Importantly, this is established independent of $\epsilon$.
	\begin{lemma}\label{Lipschitz L1}
		For any fixed $T>0$, the solution $u$ of the initial value problem \eqref{nonconservative nonlocal} with $C^1, BV$ data $u_0$ is uniformly Lipschitz in time with respect to the $L^1$ norm, i.e, there exists $K$ such that for all $t_1,t_2\leq T$:
		\[
		\pnorm{1}{u(t_2,\cdot)-u(t_1,\cdot)}\leq K(t_2-t_1).
		\]
	\end{lemma}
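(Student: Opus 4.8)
The plan is to reduce the $L^1$-Lipschitz-in-time estimate to the already-established spatial bounds --- namely, the uniform $L^\infty$ bound $\supnorm{u(t,\cdot)}\leq R=\supnorm{u_0}$, the uniform spatial Lipschitz bound $\supnorm{\partial_x u(t,\cdot)}\leq L_T$ for $t\leq T$ from \eqref{a priori sup bound}, and the Total Variation Preservation $\pnorm{TV}{u(t,\cdot)}=\pnorm{TV}{u_0}$ from Lemma~\ref{TVP} --- together with the crucial fact that $\eta_\epsilon\ast u$ is uniformly bounded in $L^\infty$ by $R$. First I would write, for $0\leq t_1\leq t_2\leq T$ and $t_2-t_1=h$, the difference via the equation itself: since $u\in C^1(\Omega_T)$ solves \eqref{nonconservative nonlocal} pointwise,
\[
u(t_2,x)-u(t_1,x)=-\int_{t_1}^{t_2}(\eta_\epsilon\ast u)(s,x)\,\partial_x u(s,x)\,ds.
\]
Taking absolute values and integrating in $x$,
\[
\pnorm{1}{u(t_2,\cdot)-u(t_1,\cdot)}\leq\int_{t_1}^{t_2}\int_{-\infty}^{\infty}\abs{(\eta_\epsilon\ast u)(s,x)}\,\abs{\partial_x u(s,x)}\,dx\,ds\leq R\int_{t_1}^{t_2}\pnorm{TV}{u(s,\cdot)}\,ds = R\,\pnorm{TV}{u_0}\,(t_2-t_1),
\]
where I used $\abs{\eta_\epsilon\ast u}\leq\supnorm{u_0}=R$ pointwise (because $\eta_\epsilon$ is a probability density and $\supnorm{u(s,\cdot)}\leq R$), and $\int\abs{\partial_x u(s,x)}\,dx=\pnorm{TV}{u(s,\cdot)}$ for the $C^1\cap BV$ function $u(s,\cdot)$. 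Setting $K:=R\,\pnorm{TV}{u_0}$ gives the claim, and manifestly $K$ depends only on $\supnorm{u_0}$ and $\pnorm{TV}{u_0}$, hence is independent of $\epsilon$ --- which is the point emphasized before the lemma.

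The one technical subtlety worth addressing is the interchange of the time integral and the spatial integral (Fubini) and the justification that $\int_{-\infty}^\infty\abs{\partial_x u(s,x)}\,dx$ is genuinely finite and equals $\pnorm{TV}{u(s,\cdot)}$; this is where the $C^1$ and $BV$ hypotheses on $u_0$ are both needed. Since $u(s,\cdot)\in C^1(\mathbb R)\cap BV(\mathbb R)$ by Theorem~\ref{classical cauchy} and Lemma~\ref{TVP}, its distributional derivative coincides with the classical one and $\pnorm{TV}{u(s,\cdot)}=\pnorm{1}{\partial_x u(s,\cdot)}$; the integrand $(s,x)\mapsto\abs{(\eta_\epsilon\ast u)(s,x)\,\partial_x u(s,x)}$ is non-negative and measurable, so Tonelli's theorem applies directly with no integrability hypothesis needed beforehand, and the final finite bound retroactively justifies all manipulations.

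I do not anticipate a serious obstacle here: the estimate is essentially immediate once one has the uniform $L^\infty$ control on $\eta_\epsilon\ast u$ and the TV preservation in hand, both of which are already proved. The only place one must be slightly careful is to phrase the bound on $\eta_\epsilon\ast u$ correctly --- it is the maximum principle (stated for \eqref{linear transport} and asserted to carry over to \eqref{nonconservative nonlocal} in the proof of Theorem~\ref{classical cauchy}) that gives $\supnorm{u(s,\cdot)}\leq\supnorm{u_0}$, and then Young's inequality (or simply $\pnorm{1}{\eta_\epsilon}=1$ and positivity) that transfers this bound to the convolution. An alternative, essentially equivalent route would be to use Lemma~\ref{ep Lip}: one has $\abs{u(t_2,x)-u(t_1,x)}\leq\supnorm{\partial_x u}\,R\,\abs{t_2-t_1}$ pointwise, but integrating that in $x$ does not immediately give an $L^1$ bound (the pointwise Lipschitz constant is not integrable), so the TV-based argument above is the correct one and is the version I would write out.
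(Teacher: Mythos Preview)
Your proposal is correct and follows essentially the same argument as the paper: write the time difference as an integral of $\partial_t u=-(\eta_\epsilon\ast u)\partial_x u$, bound $|\eta_\epsilon\ast u|\leq\supnorm{u_0}$ by the maximum principle, and use $\pnorm{1}{\partial_x u(s,\cdot)}=\pnorm{TV}{u_0}$ from Lemma~\ref{TVP}, arriving at the same $\epsilon$-independent constant $K=\supnorm{u_0}\pnorm{TV}{u_0}$. The paper invokes Lemma~\ref{ep Lip} to justify the fundamental theorem of calculus in time, whereas you appeal directly to $u\in C^1(\Omega_T)$; both are fine, and your remark on Tonelli is a welcome clarification.
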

	\begin{proof}
		By Lemma~\ref{ep Lip}, $u$ is Lipschitz in time, so by the fundamental theorem of calculus
		\[
		u(t_2,x)-u(t_1,x)=\int_{t_1}^{t_2}\partial_tu(s,x)ds.
		\]
		From \eqref{nonconservative nonlocal} and Lemma~\ref{TVP}, we can bound the $L^1$ norm of this difference as follows:
		\[
		\begin{split}
			\int_{\mathbb{R}}\abs{u(t_2,x)-u(t_1,x)}dx&=\int_{\mathbb{R}}\abs{\int_{t_1}^{t_2}\partial_tu(s,x)ds}dx \\
			&\leq\int_{\mathbb{R}}\int_{t_1}^{t_2}\abs{(\eta_{\epsilon}\ast u)\partial_{x}u}dx \\
			&=\int_{t_1}^{t_2}\int_{\mathbb{R}}\abs{(\eta_{\epsilon}\ast u)\partial_{x}u}dx \\
			&\leq\int_{t_1}^{t_2}\supnorm{u(s,\cdot)}\pnorm{1}{\partial_{x}u(s,\cdot)}ds \\
			&\leq\supnorm{u_0}\pnorm{TV}{u_0}(t_2-t_1),
		\end{split}
		\]
		which completes the proof, with $K=\supnorm{u_0}\pnorm{TV}{u_0}$.
	\end{proof}
	Thus, a uniform Lipschitz property, and therefore equicontinuity in time with respect to the $L^1$ norm is established independent of $\epsilon$. This will help us in the next section to establish the well-posedness of \eqref{nonconservative nonlocal} for less regular initial data. In higher dimensions however, the total variation estimate no longer holds independently of $\epsilon$, and hence we cannot justify the passage to the limit, although the solution remains $BV$ for $BV$ initial data.
	
	\subsection{Weak solutions}\label{weaksol}
	Smooth solutions to the nonlocal equation exist globally in time for smooth initial data. However, we also want to work with less regular initial data, and unlike the case of parabolic approximations, our solutions do not gain any regularity. Thus, we need to define admissible weak solutions. We say that $u\in C([0,\infty);L^{1}_{loc}(\mathbb{R}))$ is a weak solution to the Cauchy problem \eqref{nonconservative nonlocal} with initial data $u_0$ if, for all $\varphi\in C_c^{\infty}([0,\infty)\times\mathbb{R})$:
	\[
	\iint_{\mathbb{R}^{2}_{+}}u\left[\partial_t\varphi+\partial_{x}(\varphi(\eta_{\epsilon}\ast u))\right]dxdt+\int_{\mathbb{R}}u_0(x)\varphi(0,x)dx=0.
	\]
	Since $\eta_{\epsilon}\ast u$ is always smooth regardless of the regularity of $u$, this integral is always well-defined, and every classical solution of the initial value problem is also a weak one. From the a priori estimates on the total variation, and the uniform Lipschitz time-regularity with respect to the $L^1$ norm, we can now deduce the existence of weak solutions for $BV$ initial data by a diagonal argument.
	\begin{theorem}
		The Cauchy problem \eqref{nonconservative nonlocal} has a weak solution $u\in  C([0,T];L^{1}_{loc}(\mathbb{R}))$ for all $T>0$, such that $\supnorm{u}\leq\supnorm{u_0}$, and also for all $t_1,t_2\geq0:$
		\[
		\pnorm{L^1}{u(t_1,\cdot)-u(t_2,\cdot)}\leq\pnorm{BV}{u_0}\supnorm{u_0}\abs{t_1-t_2}.
		\]
	\end{theorem}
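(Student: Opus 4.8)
The plan is to obtain the weak solution as a limit of the smooth solutions furnished by Theorem~\ref{classical cauchy}, exploiting that the a priori bounds of Lemmas~\ref{TVP} and~\ref{Lipschitz L1} are insensitive to the $C^1$ norm of the datum (and, although $\epsilon$ is fixed throughout this statement, also to $\epsilon$). First I would mollify: fix a standard mollifier $\rho_\delta$ and set $u_0^\delta:=\rho_\delta\ast u_0$, so that $u_0^\delta\in C^1(\mathbb R)\cap\operatorname{Lip}(\mathbb R)$, $\supnorm{u_0^\delta}\leq\supnorm{u_0}$, $\pnorm{TV}{u_0^\delta}\leq\pnorm{TV}{u_0}$, and $u_0^\delta\to u_0$ in $L^1_{loc}(\mathbb R)$ (and at every Lebesgue point of $u_0$). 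Let $u^\delta$ be the corresponding global $C^1$ solution of \eqref{nonconservative nonlocal}. By the maximum principle, by Lemma~\ref{TVP}, and by Lemma~\ref{Lipschitz L1}, the family $\{u^\delta\}$ enjoys the $\delta$-uniform bounds $\supnorm{u^\delta}\leq\supnorm{u_0}$, $\pnorm{TV}{u^\delta(t,\cdot)}=\pnorm{TV}{u_0^\delta}\leq\pnorm{TV}{u_0}$ for every $t\geq 0$, and $\pnorm{1}{u^\delta(t_2,\cdot)-u^\delta(t_1,\cdot)}\leq\supnorm{u_0}\pnorm{TV}{u_0}\abs{t_2-t_1}$.

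Next I would extract a convergent subsequence. Fix $T>0$. For each $t$ in a countable dense set $Q\subset[0,T]$ the sequence $\{u^\delta(t,\cdot)\}_\delta$ is bounded in $L^\infty$ with uniformly bounded total variation, so Helly's selection theorem together with a diagonal argument yields a subsequence (not relabelled) with $u^\delta(t,\cdot)\to u(t,\cdot)$ in $L^1_{loc}(\mathbb R)$ for every $t\in Q$. The $\delta$-uniform Lipschitz-in-time estimate then upgrades this, by the standard argument (choose a finite $\tfrac{\epsilon'}{3K}$-net of $Q$ in $[0,T]$ and use the triangle inequality), to $u^\delta\to u$ in $C([0,T];L^1_{loc}(\mathbb R))$; letting $T$ run over the integers and diagonalising once more produces a single $u\in C([0,\infty);L^1_{loc}(\mathbb R))$. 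Lower semicontinuity of the relevant norms under $L^1_{loc}$ convergence gives $\supnorm{u}\leq\supnorm{u_0}$, $\pnorm{TV}{u(t,\cdot)}\leq\pnorm{TV}{u_0}$, and the asserted $L^1$ Lipschitz-in-time bound (using $\pnorm{TV}{u_0}\leq\pnorm{BV}{u_0}$); and since $u^\delta(0,\cdot)=u_0^\delta\to u_0$ in $L^1_{loc}$ while also $u^\delta(0,\cdot)\to u(0,\cdot)$, we get $u(0,\cdot)=u_0$.

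It remains to pass to the limit in the weak formulation. Because $\epsilon$ is fixed, $\eta_\epsilon$ and $\eta_\epsilon'=\partial_x\eta_\epsilon$ are fixed functions in $C_c^\infty(\mathbb R)$, so from $u^\delta(t,\cdot)\to u(t,\cdot)$ in $L^1_{loc}$ and the uniform $L^\infty$ bound one sees that both $\eta_\epsilon\ast u^\delta$ and $\partial_x(\eta_\epsilon\ast u^\delta)=\eta_\epsilon'\ast u^\delta$ converge to $\eta_\epsilon\ast u$ and $\eta_\epsilon'\ast u$, locally uniformly on $\Omega_T$ and with $\delta$-uniform bounds (and equi-Lipschitz in $x$). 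Writing $\partial_x\bigl(\varphi(\eta_\epsilon\ast u^\delta)\bigr)=(\partial_x\varphi)(\eta_\epsilon\ast u^\delta)+\varphi(\eta_\epsilon'\ast u^\delta)$, the bracketed factor that multiplies $u^\delta$ in the weak formulation converges boundedly and locally uniformly, while $u^\delta\to u$ in $L^1$ on the compact support of $\varphi$; hence the products converge in $L^1$ and the spacetime integral passes to the limit. The boundary term converges since $u_0^\delta\to u_0$ in $L^1_{loc}$ and $\varphi(0,\cdot)\in C_c^\infty(\mathbb R)$. Thus $u$ is a weak solution with the stated properties. The step requiring the most care is the compactness argument — combining Helly's theorem in space with the uniform time-regularity to secure $C([0,T];L^1_{loc})$ convergence — since the passage to the limit itself is genuinely easy here: for fixed $\epsilon$ the nonlocal term is a harmless (essentially compact) perturbation, and the real difficulty of its interaction with the nonlinearity is deferred to the singular limit $\epsilon\to 0^+$ treated later.
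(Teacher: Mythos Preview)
Your proposal is correct and follows essentially the same approach as the paper: mollify the $BV$ datum, invoke Theorem~\ref{classical cauchy} and the uniform bounds from Lemmas~\ref{TVP} and~\ref{Lipschitz L1}, extract a convergent subsequence via Helly plus the time-Lipschitz estimate, and pass to the limit in the weak formulation after expanding $\partial_x(\varphi(\eta_\epsilon\ast u^\delta))$. The paper's proof is slightly terser (it cites \cite{timecompactness} for the diagonalisation and appeals directly to dominated convergence for the nonlinear term), while you spell out the dense-set/upgrade mechanism and the local uniform convergence of $\eta_\epsilon\ast u^\delta$ and $\eta_\epsilon'\ast u^\delta$, but these are expository rather than substantive differences.
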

	\begin{proof}
		Given $u_0\in BV(\mathbb{R})$, let $u_0^{n}(x)=\eta_{1/n}\ast u_0(x)$. Then for all $n,u_0^n\in BV(\mathbb{R})\cap C^1(\mathbb{R})\cap\operatorname{Lip}(\mathbb{R})$. By Theorem \ref{classical cauchy}, the Cauchy problem \eqref{nonconservative nonlocal} with initial data $u_0^{n}$ has a classical solution for each $n\in\mathbb{N}$, say $u^n$, defined globally. Furthermore, $\supnorm{u_0^{n}}\leq\supnorm{u_0}$ and $\pnorm{TV}{u_0^{n}}\leq\pnorm{TV}{u_0}$, so we have that for all $t>0:$
		\[
		\pnorm{TV}{u^n(t,\cdot)}=\pnorm{TV}{u_0^n}\leq\pnorm{TV}{u_0},
		\]
		and for all $t_1,t_2>0:$
		\[
		\pnorm{L^1}{u^{n}(t_1,\cdot)-u^n(t_2,\cdot)}\leq\supnorm{u_0^n}\pnorm{TV}{u_0^n}\abs{t_1-t_2}\leq\supnorm{u_0}\pnorm{TV}{u_0}\abs{t_1-t_2}.
		\]
		The sequence $\{u^n\}$ is comprised of weak solutions with initial data converging to $u_0$. By Helly's theorem, $\{u^n(t,\cdot)\}$ is uniformly bounded in BV for all $t>0$ and therefore compact in $L^1([-K,K])$ for any $K>0$.
		
		Thus, by a diagonalization argument along the lines of \cite{timecompactness}, we can extract a subsequence $u^{n_k}$ that converges to some $u\in L^1_{loc}([0,\infty)\times\mathbb{R})$ pointwise almost everywhere and in $L^1$ on compact sets. The limit $u$ also inherits the Lipschitz time-regularity with respect to the $L^1$ norm. Hence, for any $\varphi\in C_c^{\infty}(\mathbb{R}\times(0,\infty))$ with support (say) $K$:
		\[
		\begin{split}
			&\iint_{K}u[\partial_t\varphi+\partial_x(\varphi(\eta_{\epsilon}\ast u))]dxdt \\
			=&\iint_{K}u[\partial_t\varphi+\partial_x\varphi(\eta_{\epsilon}\ast u)+\varphi(\eta_{\epsilon}^{\prime}\ast u)]dxdt \\
			=&\lim_{n\to\infty}\iint_{K}u^n[\partial_t\varphi+\partial_x\varphi(\eta_{\epsilon}\ast u^n)+\varphi(\eta_{\epsilon}^{\prime}\ast u^n)]dxdt \\
			=&\lim_{n\to\infty}\iint_{K}u^n[\partial_t\varphi+\partial_x(\varphi(\eta_{\epsilon}\ast u^n))]dxdt=0,
		\end{split}
		\]
		by an application of Lebesgue's dominated convergence theorem, since for any smooth function $\kappa$ with compact support and $u^n\to u$ in $L^1$, $\kappa\ast u^n\to\kappa\ast u$ pointwise almost everywhere, and we can simply take the dominating function to be $\supnorm{u_0}$ by the maximum principle because we can restrict ourselves to a compact domain $K$. This completes the proof; by time-regularity with respect to the $L^1$ norm we can see that the initial condition is also satisfied, and thus a weak solution to \eqref{nonconservative nonlocal} exists within the postulated class.
	\end{proof}
	We can now prove the uniqueness of this limit; more precisely we will prove stability with respect to $L^1$ perturbations, from which uniqueness will trivially follow.  As is common in the nonlocal literature, we do not require additional criteria such as entropy conditions to ensure the uniqueness of weak solutions. However, as we shall see, this stability is not independent of $\epsilon$, and thus cannot naïvely help us pass to the limit as $\epsilon\to0$.
	\begin{lemma}\label{L1 stability}
		Let $u,v\in L^{\infty}(\Omega_T)\cap BV(\Omega_T)$ be two weak solutions of \eqref{nonconservative nonlocal} with initial data $u_0,v_0$ respectively such that $(u_0-v_0)\in L^1(\mathbb{R})$. Then, there exists a constant $C_{\epsilon}>0$ depending on $\epsilon$ such that
		\[\pnorm{1}{(u-v)(t,\cdot)}\leq e^{C_{\epsilon}t}\pnorm{1}{u_0-v_0}\]
		Note that if $u_0-v_0\notin L^1(\mathbb{R})$, then the right-hand side is infinite anyway so the inequality would trivially hold.
	\end{lemma}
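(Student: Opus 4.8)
The plan is to derive an evolution equation for $w := u - v$ and estimate $\|w(t,\cdot)\|_{L^1}$ via a Gronwall argument, paying careful attention to the fact that both the transporting velocity $\eta_\epsilon \ast u$ and the ``source'' term involve convolutions. Since $u$ and $v$ are weak solutions with the same regularity ($BV$ in space, Lipschitz in time with respect to $L^1$), one can either work directly at the level of distributions or, more cleanly, approximate via the smooth solutions from Theorem~\ref{classical cauchy} and pass to the limit at the end. Working formally, $w$ satisfies
\[
\partial_t w + (\eta_\epsilon \ast u)\,\partial_x w + (\eta_\epsilon \ast w)\,\partial_x v = 0,
\]
which one rewrites in the conservative-looking form
\[
\partial_t w + \partial_x\big((\eta_\epsilon \ast u)\,w\big) = (\eta_\epsilon' \ast u)\,w - (\eta_\epsilon \ast w)\,\partial_x v.
\]
First I would establish that $w(t,\cdot) \in L^1(\mathbb R)$ for all $t$; this is not automatic from $u,v \in L^\infty \cap BV$, but follows because $w_0 \in L^1$ by hypothesis and the $L^1$ norm cannot instantaneously become infinite under a transport-type flow with bounded data (alternatively, restrict to the approximating smooth solutions where everything is manifestly finite).

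The core estimate proceeds by testing the equation for $w$ against $\operatorname{sgn}(w)$, or more rigorously against a smooth approximation $\mathrm{sgn}_\delta(w)$ and letting $\delta \to 0$. Integrating the term $\partial_x\big((\eta_\epsilon \ast u) w\big)$ against $\operatorname{sgn}(w)$ gives, after the usual Kruzhkov-type manipulation, a contribution controlled by $\|\partial_x(\eta_\epsilon \ast u)\|_{L^\infty} = \|\eta_\epsilon' \ast u\|_{L^\infty} \leq \|u_0\|_{L^\infty}\|\eta_\epsilon'\|_{L^1}$ times $\|w\|_{L^1}$ --- this is where the $\epsilon$-dependence enters, since $\|\eta_\epsilon'\|_{L^1} = \epsilon^{-1}\|\eta'\|_{L^1}$. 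The term $(\eta_\epsilon' \ast u)\,w$ on the right is bounded in $L^1$ by $\|u_0\|_{L^\infty}\|\eta_\epsilon'\|_{L^1}\|w\|_{L^1}$ directly. For the remaining term $(\eta_\epsilon \ast w)\,\partial_x v$, I would use that $\partial_x v$ is a finite measure with total mass $\|v(t,\cdot)\|_{TV} = \|v_0\|_{TV}$ (by Lemma~\ref{TVP}, via the approximation) together with $\|\eta_\epsilon \ast w\|_{L^\infty} \leq \|\eta_\epsilon\|_{L^\infty}\|w\|_{L^1} = \epsilon^{-1}\|\eta\|_{L^\infty}\|w\|_{L^1}$; hence its $L^1$ pairing against $\operatorname{sgn}(w)$ is bounded by $\epsilon^{-1}\|\eta\|_{L^\infty}\|v_0\|_{TV}\|w\|_{L^1}$. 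Collecting these, one obtains
\[
\frac{d}{dt}\|w(t,\cdot)\|_{L^1} \leq C_\epsilon \,\|w(t,\cdot)\|_{L^1},
\]
with $C_\epsilon$ depending on $\|u_0\|_{L^\infty}$, $\|v_0\|_{TV}$, $\epsilon$, and the fixed kernel $\eta$, and Gronwall's inequality yields the claim.

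The main obstacle is rigor at low regularity: the manipulation above is formally a Kruzhkov doubling-of-variables argument, but $u$ and $v$ are only $BV$ and need not be entropy solutions in the classical sense, so the cleanest route is to work with the smooth approximants $u^n, v^n$ (solutions with mollified data $\eta_{1/n}\ast u_0$, $\eta_{1/n}\ast v_0$) from the existence proof, for which $w^n = u^n - v^n$ is genuinely $C^1$ and all integrations by parts are legitimate; the constant $C_\epsilon$ there is uniform in $n$ because it depends only on $\|u_0\|_{L^\infty}$, $\|v_0\|_{TV}$ and $\epsilon$, which are controlled uniformly. One then passes to the limit $n \to \infty$ using the $L^1_{loc}$ convergence established in Theorem~\ref{weaksol}, noting that $\|w_0^n - w_0\|_{L^1} \to 0$ since mollification converges in $L^1$ on $L^1$ data. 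A secondary subtlety is justifying $\frac{d}{dt}\|w^n\|_{L^1} = \int \operatorname{sgn}(w^n)\partial_t w^n$, which requires that $w^n$ does not ``spend time'' on a set where it vanishes to infinite order; this is handled by the standard $\mathrm{sgn}_\delta$ regularization and dominated convergence, using $\|w^n\|_{L^\infty} \leq 2\|u_0\|_{L^\infty} + 2\|v_0\|_{L^\infty}$ and the fact that, thanks to finite propagation speed for the transport equation, $w^n(t,\cdot)$ is supported in a bounded set whenever $w_0^n$ is --- and for general $L^1$ data one approximates further or invokes the $L^1$ integrability directly.
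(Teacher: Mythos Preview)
Your formal computation matches the paper's closely: the paper also derives a transport equation for the difference (writing $w=|u-v|$ directly rather than $w=u-v$ tested against $\operatorname{sgn}(w)$, which amounts to the same thing), puts it in the form $\partial_t w+\partial_x((\eta_\epsilon\ast u)w)=\text{RHS}$, integrates in space, and applies Gr\"onwall. The paper's constant is $C_\epsilon=\supnorm{\eta_\epsilon}(\pnorm{TV}{u}+\pnorm{TV}{v})$, obtained by bounding $\supnorm{\eta_\epsilon'\ast u}=\supnorm{\eta_\epsilon\ast\partial_x u}\le\supnorm{\eta_\epsilon}\pnorm{TV}{u}$ rather than your $\pnorm{1}{\eta_\epsilon'}\supnorm{u_0}$; both estimates are valid. (Incidentally, after testing against $\operatorname{sgn}(w)$ the conservative transport term on the left integrates to zero, so your ``term 1'' does not actually contribute---harmless over-counting.)

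There is, however, a circularity in your proposed route to rigor. You want to prove the estimate for the smooth approximants $u^n,v^n$ and then pass to the limit using the convergence from the existence theorem. But that theorem only asserts that $u^n$ converges (along a subsequence) to \emph{some} weak solution with data $u_0$; it does not assert $u^n\to u$ for the \emph{given} weak solution $u$ appearing in the lemma. Identifying the limit with $u$ already requires uniqueness of weak solutions, which is exactly what the lemma is meant to provide. Thus the approximation argument proves stability only within the class of solutions obtainable as limits of smooth approximants, not for arbitrary $BV$ weak solutions. The paper avoids this by working directly at the $BV$ level, invoking the chain rule and the Gauss--Green theorem for $BV$ functions on the equation for $|u-v|$; your own manipulations can be made rigorous in the same way, but the approximation detour does not close the argument as stated.
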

	\begin{proof}
		Since $u,v$ are of bounded variation, their derivatives are Radon measures. Additionally, $w=\abs{u-v}$ is also of bounded variation, and
		\[
		\begin{split}
			\partial_tw&=\text{sgn}(u-v)\partial_t(u-v) \\
			&=\text{sgn}(v-u)((\eta_{\epsilon}\ast u)\partial_{x}(u-v)+(\eta_{\epsilon}\ast(u-v))\partial_{x}v) \\
			&=-(\eta_{\epsilon}\ast u)\partial_{x}w+\text{sgn}(v-u)(\eta_{\epsilon}\ast(u-v))\partial_{x}v. 
		\end{split}
		\]
		Hence,
		\[
		\partial_tw+\partial_{x}((\eta_{\epsilon}\ast u)w(t,x))=-\text{sgn}(u-v)(\eta_{\epsilon}\ast(u-v))\partial_{x}v+w(\eta_{\epsilon}\ast\partial_{x}u),
		\]
		and so, by the Gauss-Green theorem for $BV$ functions,
		\[
		\odv{}{t}\pnorm{1}{w(t,\cdot)}\leq\pnorm{1}{w(t,\cdot)}\supnorm{\eta_{\epsilon}}\left(\pnorm{TV}{u}+\pnorm{TV}{v}\right),
		\]
		after which the lemma follows from Grönwall's inequality, with
		\[
		C_{\epsilon}=\supnorm{\eta_{\epsilon}}\left(\pnorm{TV}{u}+\pnorm{TV}{v}\right),
		\]
		completing the proof.
	\end{proof}
	We can also prove the existence of weak solutions for general $L^{\infty}$ data by taking a subsequence in the weak-$\ast$ topology of $L^{\infty}$, but uniqueness is not guaranteed as in the BV case. We will not pursue this matter further here.
	
	As a corollary of the uniqueness result, and by analogy with the linear transport equation, we also have finite speed of propagation for the non-local equation, for mass if not information. It is a trivial application of the known results on transport equations as covered in \cite{transport}.
	\begin{corollary}
		Let $u_0\in BV(\mathbb{R})$. Then, letting $y_{t,x}$ denote the integral curves of the (smooth) velocity field $\eta_{\epsilon}\ast u$, we have that $u(t,x)=u_0(y_{t,x}(0))$ almost everywhere, i.e., at all points of continuity.
	\end{corollary}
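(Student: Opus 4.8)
The plan is to fix the weak solution once and for all, \emph{freeze} the nonlocal velocity field, and thereby reduce \eqref{nonconservative nonlocal} to a genuinely linear transport equation, to which the classical representation-and-uniqueness theory applies directly.

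First I would take the weak solution $u$ furnished by the existence theorem proved above; it is the \emph{unique} weak solution of \eqref{nonconservative nonlocal} with datum $u_0$ by Lemma~\ref{L1 stability}, it satisfies the maximum principle $\supnorm{u(t,\cdot)}\leq\supnorm{u_0}$, and $u\in C([0,T];L^1_{loc}(\mathbb R))$. Set $b(t,x):=[\eta_{\epsilon}\ast u(t,\cdot)](x)$. Then $b$ is bounded by $\supnorm{u_0}$, uniformly Lipschitz in $x$ with a constant depending only on $\epsilon$ and $\supnorm{u_0}$ (exactly as computed in the proof of Theorem~\ref{classical cauchy}), and jointly continuous in $(t,x)$, since $\eta_{\epsilon}$ has compact support and $t\mapsto u(t,\cdot)$ is continuous into $L^1_{loc}$. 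These are precisely the hypotheses of the Cauchy--Lipschitz theorem, so for every $(t,x)$ the characteristic ODE \eqref{characteristic ODE} (with $g=u$) has a unique solution $y_{t,x}$ on $[0,t]$, and the endpoint map $x\mapsto y_{t,x}(0)$ is a bi-Lipschitz homeomorphism of $\mathbb R$ onto itself.

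Next I would observe that, with $b$ now treated as a \emph{fixed} coefficient, the weak formulation of \eqref{nonconservative nonlocal}, namely $\iint u[\partial_t\varphi+\partial_x(\varphi\, b)]\,dx\,dt+\int u_0\varphi(0,\cdot)\,dx=0$ for all admissible $\varphi$, is exactly the distributional formulation of the \emph{linear} non-conservative transport equation $\partial_t v+b\,\partial_x v=0$, $v(0,\cdot)=u_0$ (expand $\partial_x(\varphi b)=\varphi\,\partial_x b+b\,\partial_x\varphi$ to match the two forms). Hence $u$ is a bounded distributional solution of this linear Cauchy problem. Since $b$ is bounded and Lipschitz in $x$, the standard theory of linear transport equations, as collected in \cite{transport}, guarantees uniqueness in $L^\infty$ and that the solution is transported along characteristics: $v(t,x)=u_0(y_{t,x}(0))$ for a.e.\ $x$ and every $t\geq0$. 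Therefore $u(t,x)=u_0(y_{t,x}(0))$ almost everywhere; and because $u_0\in BV(\mathbb R)$ is composed with a bi-Lipschitz homeomorphism, the right-hand side is itself a $BV$ function of $x$, so the identity holds at every point of continuity, which is the precise meaning of the statement.

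No step is genuinely difficult; the only points that need care are (i) checking that $b=\eta_{\epsilon}\ast u$ really has the regularity needed to run Cauchy--Lipschitz and to invoke the linear uniqueness theorem, and (ii) verifying the one-line integration by parts identifying the frozen-$b$ weak formulation with the distributional linear transport equation. Alternatively, one could bypass the linear uniqueness theorem entirely and pass to the limit in the identities $u^n(t,x)=u_0^n(y^n_{t,x}(0))$ satisfied by the smooth approximants of Theorem~\ref{classical cauchy}, using that $\eta_{\epsilon}\ast u^n\to\eta_{\epsilon}\ast u$ locally uniformly and hence that the flows $y^n_{t,x}$ converge locally uniformly to $y_{t,x}$; but the route through linear transport uniqueness is cleaner, and it is the one I would present.
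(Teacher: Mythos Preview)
Your proposal is correct and follows exactly the route indicated in the paper: the paper's proof is simply the one-line remark that the corollary is ``a trivial application of the known results on transport equations as covered in \cite{transport},'' preceded by the observation that it is a corollary of the uniqueness result. You have carefully spelled out this argument---freezing $b=\eta_{\epsilon}\ast u$, checking its Cauchy--Lipschitz regularity, identifying the weak formulation with the linear transport equation, and invoking the representation/uniqueness theory---which is precisely what the paper leaves implicit.
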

	Since $u$ obeys a maximum principle, the integral curves have a uniform Lipschitz bound and therefore propagate with finite speed depending only on $\supnorm{u_0}$.
	
	\subsection{The multidimensional problem}\label{multidim}
	Consider the multi-dimensional scalar conservation law
	\begin{equation}\label{loc}
		\partial_tu+\sum_{i=1}^{n}\partial_if_i(u)=0\text{ in }\mathbb{R}^d\times[0,T]
	\end{equation}
	which can also be written in quasilinear form
	\[
	\partial_tu+\sum_{i=1}^{n}f_i^{\prime}(u)\partial_iu=0,
	\]
	for which we can apply a non-local velocity-regularisation schema and obtain
	\begin{equation}\label{nonloc}
		\partial_tu^{\epsilon}+(\eta_{\epsilon}\ast F^{\prime}(u))\cdot\nabla u^{\epsilon}=0,
	\end{equation}
	where
	\[
	F^{\prime}(u)=(f_1^{\prime}(u),\ldots,f_n^{\prime}(u)).
	\]
	We will always assume that $F$ is locally Lipschitz. This is still a non-linear transport equation with convolution as before, and if we define characteristics starting from time zero by $y^{\epsilon}(t;x)$, then they satisfy the differential equation
	\[
	\dot{y}^{\epsilon}(t;x)=[\eta_{\epsilon}\ast(F\circ u)](y^{\epsilon}(t;x),t)\text{ with }y(0,x)=x.
	\]
	By taking one derivative and letting $v^{\epsilon}_i=\partial_iu^{\epsilon}$, we get the transport equation for the derivative as before
	\[
	\partial_tv^{\epsilon}_i+(\eta_{\epsilon}\ast F^{\prime}(u^{\epsilon}))\cdot\nabla v^{\epsilon}_i=-\partial_i(\eta_{\epsilon}\ast F^{\prime}(u^{\epsilon}))\cdot\nabla u^{\epsilon}.
	\]
	Hence, the derivative does not blow up, and the fixed point argument for small time can be extended to a global existence result for the nonlocal equation.
	
	The same method of characteristics/fixed point arguments suffice to conclude well-posedness of the multidimensional non-local equation. The only difference is the total variation estimate, which cannot be obtained in the same way. However, we still have some estimate of the form given below.
	\begin{lemma}
		The total variation of $u^{\epsilon}(\cdot,t)$ at each time level is bounded.
	\end{lemma}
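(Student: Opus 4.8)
The plan is to leverage, once again, the transport structure of \eqref{nonloc}. Write $b^{\epsilon}(x,t):=[\eta_{\epsilon}\ast F^{\prime}(u^{\epsilon})](x,t)$ for the velocity field, which is smooth in $x$ because every spatial derivative falls onto $\eta_{\epsilon}$, and let $\Phi_t:=y^{\epsilon}(t;\cdot)$ be the associated forward flow, with inverse $Y_t:=\Phi_t^{-1}$; then $u^{\epsilon}(x,t)=u_0(Y_t(x))$. Since the transport equation obeys the maximum principle $\supnorm{u^{\epsilon}(\cdot,t)}\leq\supnorm{u_0}$, the field $b^{\epsilon}$ has, uniformly in $t$, a bounded gradient and a bounded divergence,
\[
\supnorm{D b^{\epsilon}(\cdot,t)}\leq L_{\epsilon},\qquad \supnorm{(\nabla\!\cdot b^{\epsilon})(\cdot,t)}\leq M_{\epsilon},
\]
where $L_{\epsilon},M_{\epsilon}$ depend only on $\epsilon$ (through $\pnorm{1}{\nabla\eta_{\epsilon}}$), on $\supnorm{u_0}$, and on $F$. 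Grönwall applied to the variational equation for $D\Phi_t$ then gives $\supnorm{D\Phi_t},\supnorm{DY_t}\leq e^{L_{\epsilon}t}$, and Liouville's formula gives $e^{-M_{\epsilon}t}\leq\det D\Phi_t\leq e^{M_{\epsilon}t}$.

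I would first establish the bound for $u_0\in BV(\mathbb{R}^d)\cap C^1(\mathbb{R}^d)\cap\operatorname{Lip}(\mathbb{R}^d)$, for which $u^{\epsilon}$ is the classical solution constructed as in Theorem~\ref{classical cauchy} and the identity $u^{\epsilon}(x,t)=u_0(Y_t(x))$ holds pointwise with $Y_t$ of class $C^1$ in $x$. Differentiating, $\nabla u^{\epsilon}(x,t)=DY_t(x)^{\top}\nabla u_0(Y_t(x))$ for a.e. $x$, so that, changing variables $z=Y_t(x)$, $dx=\abs{\det D\Phi_t(z)}\,dz$,
\[
\pnorm{TV}{u^{\epsilon}(\cdot,t)}=\int_{\mathbb{R}^d}\abs{DY_t(x)^{\top}\nabla u_0(Y_t(x))}\,dx\leq e^{L_{\epsilon}t}\int_{\mathbb{R}^d}\abs{\nabla u_0(z)}\,\abs{\det D\Phi_t(z)}\,dz\leq e^{(L_{\epsilon}+M_{\epsilon})t}\,\pnorm{TV}{u_0}.
\]
Alternatively, one may take the transport equation for $v_i^{\epsilon}=\partial_i u^{\epsilon}$ derived above, rewrite it in divergence form, multiply componentwise by the sign of $\partial_i u^{\epsilon}$, integrate over $\mathbb{R}^d$, and close the resulting differential inequality $\tfrac{d}{dt}\pnorm{1}{\nabla u^{\epsilon}(\cdot,t)}\lesssim_{d}(L_{\epsilon}+M_{\epsilon})\pnorm{1}{\nabla u^{\epsilon}(\cdot,t)}$ by Grönwall; this yields the same conclusion.

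For general $u_0\in BV(\mathbb{R}^d)$ I would argue by density, exactly as in the proof of Theorem~\ref{weaksol}: put $u_0^{n}:=\eta_{1/n}\ast u_0$, apply the previous step to the classical solutions $u^{\epsilon,n}$ — with the \emph{same} constants $L_{\epsilon},M_{\epsilon}$, since $\supnorm{u_0^n}\leq\supnorm{u_0}$, and with $\pnorm{TV}{u_0^n}\leq\pnorm{TV}{u_0}$ — extract a subsequence converging in $L^1_{loc}$ to the weak solution $u^{\epsilon}$, and invoke the lower semicontinuity of total variation under $L^1_{loc}$ convergence to obtain $\pnorm{TV}{u^{\epsilon}(\cdot,t)}\leq e^{(L_{\epsilon}+M_{\epsilon})t}\pnorm{TV}{u_0}$ for each $t$.

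The only genuinely delicate point is the one that keeps this estimate merely qualitative: the constants $L_{\epsilon}$ and $M_{\epsilon}$ blow up as $\epsilon\to0^{+}$, because $\pnorm{1}{\nabla\eta_{\epsilon}}\to\infty$, and there is no multidimensional counterpart of the $\epsilon$-independent bound of Lemma~\ref{TVP} (equivalently, of Lemma~\ref{Lipschitz L1}). In one space dimension the flow merely reorders the line, so the sum defining $\pnorm{TV}{u_0}$ can only be regrouped, never enlarged; in dimension $d\geq2$ the flow $\Phi_t$ can genuinely stretch the level sets of $u_0$ at a rate controlled only by $\supnorm{D b^{\epsilon}}$, so one cannot expect better than an $\epsilon$-dependent bound, and this lemma therefore does not by itself furnish the compactness needed to pass to the singular limit when $d\geq2$.
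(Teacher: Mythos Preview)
Your argument is correct and follows essentially the same route as the paper: both rest on the representation $u^{\epsilon}(\cdot,t)=u_0\circ Y_t$ with $Y_t$ a bilipschitz diffeomorphism of $\mathbb{R}^d$, so that composition with a bilipschitz map preserves $BV$ with an $\epsilon$-dependent constant. The paper alludes to the dual formulation of total variation and records the bound $\supnorm{\partial_xY^{\epsilon}(\cdot,t)}^{d-1}\pnorm{TV}{u_0}$ without further detail, whereas you carry out the chain-rule computation explicitly for smooth data and then pass to general $BV$ by density and lower semicontinuity---a more complete treatment, but the same idea, and your closing remark on why the constant is necessarily $\epsilon$-dependent for $d\geq2$ matches the paper's own caveat.
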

	\begin{proof}
		Unlike the 1D case, we cannot use the supremum-over-partitions definition to easily conclude, and $L^1$ norm of the gradient does not work either because it is $\epsilon$-dependent. Instead we use the dual formulation of the total variation seminorm:
		\[
		\pnorm{TV}{u}=\sup\left\{\int u\operatorname{div}\varphi dx;\varphi\in C_c^{\infty},\supnorm{\varphi}\leq1\right\}.
		\]
		Let $Y_{\epsilon}(x,t):\mathbb{R}^n\times[0,T]\to\mathbb{R}^n$ denote the backward transport function of the nonlocal equation, i.e. $Y_{\epsilon}(x,t)=y^{\epsilon}_x(0)$, where $y^{\epsilon}_{x}(s)$ solves the terminal value problem
		\begin{equation*}
			\begin{split}
				\dot{y}^{\epsilon}_x(s)&=\eta_{\epsilon}\ast u^{\epsilon}(y^{\epsilon}_x(s),s), \\
				y^{\epsilon}_x(t)&=x,
			\end{split}
		\end{equation*}
		in the domain $[0,t]$. Now by the continuous dependence properties of ODEs, $Y^{\epsilon}(\cdot,t)$ is a bilipschitz diffeomorphism of $\mathbb{R}^n$ onto itself. Furthermore, we know that the solution $u^{\epsilon}$ is given as
		\[
		u^{\epsilon}(x,t)=u_0(Y^{\epsilon}(x,t)).
		\]
		Since we are no longer in one spatial dimension, the total variation of $u^{\epsilon}$ is no longer (necessarily) conserved, but bounded by
		\[
		\pnorm{TV}{u^{\epsilon}(\cdot,x)}\leq\supnorm{\partial_xY^{\epsilon}(\cdot,t)}^{d-1}\pnorm{TV}{u_0}.
		\]
	\end{proof}
	Thus, solutions to the nonlocal conservation law remain in BV but we do not have a uniform bound independent of $\epsilon$ to establish pre-compactness. However, we can obtain convergence to the appropriate limit in other ways, as long as the entropy solution is smooth. Analogous results hold for a `flux-regularised' non-local equation of the form
	\begin{equation}\label{nonlocflux}
		\partial_tu^{\epsilon}+F^{\prime}(\eta_{\epsilon}\ast u)\cdot\nabla u^{\epsilon}=0
	\end{equation}
	as well.
	
	For fixed $\epsilon>0$, the solution of the non-local equation(s) remains of bounded variation if the initial data is of bounded variation as well. Hence, the existence and uniqueness of weak solutions, Lipschitz continuity with respect to the $L^1$ norm, and similar properties remain as in the scalar case. However, we can no longer assume a priori that the sequence of solutions as $\epsilon\to0$ is precompact, since the total variation estimate now depends on $\epsilon$.
	
	\section{The non-local to local limit}\label{singlimit}
	We know from the theory of conservation laws that for Lipschitz initial data, the Cauchy problem for the Burgers equation has a Lipschitz solution for small time. In fact, the exact time of blow-up $t^{\ast}$ for the Lipschitz constant can be precisely calculated as
	\[
	t^{\ast}=\left[\mathop{\operatorname{ess\,sup}}_{y\in\mathbb{R}}\left\{-\partial_xu_0(y)\right\}\right]^{-1},
	\]
	with $t^{\ast}=\infty$ if $u_0$ is monotone non-decreasing. In the following subsection, we justify the non-local to local limit in the smooth regime, i.e when the entropy solution of the Burgers equation is known to be Lipschitz. The results we obtain are not specific to Burger equation and work for any scalar conservation law, appropriately regularised, and therefore we will present our results in this slightly more general setting which includes also the multidimensional case.
	
	In their paper, Coron et. al. \cite{NLtransport} demonstrated that every convergent subsequence $u^{\epsilon}$ converges to a \textit{weak} solution of \eqref{burger}, but not necessarily the entropy solution. Passage to the unique entropy solution was also demonstrated for quasi-concave initial data. In this section we expand this convergence result for a broader class of initial data.
	
	\subsection{Smooth regime}\label{smooth regime}
	Let $F\in C^2,F:\mathbb{R}\to\mathbb{R}^n$ be such that $F^{\prime}$ is uniformly Lipschitz with constant $M$, and consider the following scalar conservation law:
	\begin{equation}\label{scl}
		u_t+\nabla_{x}\cdot F(u)=0.
	\end{equation}
	Note that this is not a restrictive assumption, since the Cauchy problems we are working with satisfy a maximum principle with respect to the initial data. When the entropy solution is Lipschitz, we can apply the chain rule to get
	\begin{equation}\label{pwise scl}
		u_t+F^{\prime}(u)\cdot\nabla u=0.
	\end{equation}
	Now, there are two possible regularisations of \eqref{scl} matching our `nonlocal schema' for Burgers equation, as follows:
	\begin{equation}\label{velocity reg}
		u^{\epsilon,1}_t+(\eta_{\epsilon}\ast F^{\prime}(u^{\epsilon,1}))\cdot\nabla u^{\epsilon,1}=0,
	\end{equation}
	\begin{equation}\label{flux reg}
		u^{\epsilon,2}_t+F^{\prime}(\eta_{\epsilon}\ast u^{\epsilon,2})\cdot u^{\epsilon,2}=0,
	\end{equation}
	and the well-posedness theory for the nonlocal Burgers equation illustrated before holds mutatis mutandis. Note that both regularisations are equivalent to \eqref{nonconservative nonlocal} for the Burgers equation, where $f^{\prime}$ is linear. Also note that \eqref{velocity reg} and \eqref{flux reg} are just the cases of \eqref{nonloc} and \eqref{nonlocflux} respectively.
	
	We will show that, locally in time, smooth solutions of \eqref{nonconservative nonlocal} converge to the entropy solution of \eqref{scl} with the same initial data as $\epsilon\to0^+$. The following theorem is independent of spatial dimension, since we only analyse the deviation along characteristics.
	\begin{theorem}\label{smoothconv}
		Consider $u_0\in\operatorname{Lip}(\mathbb{R}^n), F:\mathbb{R}\to\mathbb{R}^n$ such that $F^{\prime}\in\operatorname{Lip}(\mathbb{R})$, and let $\tau>0$ be the maximal time such that the solution $u$ to \eqref{scl} with initial data $u_0$ is uniformly Lipschitz on $\Omega_T$ for all $T<\tau$, with $\Omega_T$ now denoting the domain $[0,T]\times\mathbb{R}^n$. \linebreak
		Let $u$ be the solution to \eqref{burger} with this initial data. Then, the family of solutions to the nonlocal equation(s) \eqref{velocity reg} and \eqref{flux reg} with the same initial data, say $u^{\epsilon,1},u^{\epsilon,2}$, converge uniformly to $u$ as $\epsilon\to0$ on domains $\Omega_T$ when $T<\tau$. On $\Omega_{\tau}$, in the typical case where $\tau$ is finite, we thus have convergence in $L^1$ on compact sets by the dominated convergence theorem.
	\end{theorem}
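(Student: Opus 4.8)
The plan is to compare the nonlocal and local solutions along characteristics and close the comparison with a Grönwall estimate. The key structural input is that, for $t<\tau$, the Lipschitz entropy solution $u$ of \eqref{scl} is precisely the classical solution produced by the method of characteristics, so that with $X(t;x):=x+tF'(u_0(x))$ one has $u(t,X(t;x))=u_0(x)$ and, for $t\le T<\tau$, the map $x\mapsto X(t;x)$ is a bi-Lipschitz homeomorphism of $\mathbb{R}^n$ whose Lipschitz constant is controlled by $L$, the Lipschitz constant of $u$ on $\Omega_T$ (this is exactly the content of $u$ remaining Lipschitz up to $\tau$). I would carry the argument out for the velocity regularisation \eqref{velocity reg}; \eqref{flux reg} differs by one trivial modification recorded at the end. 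Let $X^{\epsilon}(t;x)$ be the forward characteristic of \eqref{velocity reg}, i.e.\ the solution of $\dot X^{\epsilon}(t;x)=(\eta_{\epsilon}\ast F'(u^{\epsilon,1}))(t,X^{\epsilon}(t;x))$ with $X^{\epsilon}(0;x)=x$; since $\supnorm{u^{\epsilon,1}}\le\supnorm{u_0}$ by the maximum principle the velocity field is bounded, and for each fixed $\epsilon$ it is Lipschitz in space (as in Section~\ref{c1wp}), so $x\mapsto X^{\epsilon}(t;x)$ is a homeomorphism of $\mathbb{R}^n$ and $u^{\epsilon,1}(t,X^{\epsilon}(t;x))=u_0(x)$ by the transport representation. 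I would emphasise that no $\epsilon$-uniform derivative bound on $u^{\epsilon,1}$ is needed: the only regularity that enters is that of the \emph{local} solution, which is precisely the improvement the theorem records.

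Define $g(t):=\sup_{x\in\mathbb{R}^n}\abs{X^{\epsilon}(t;x)-X(t;x)}$, which is finite (both velocities are bounded) and absolutely continuous in $t$ with $g(0)=0$. First, for any $y\in\mathbb{R}^n$ write $y=X^{\epsilon}(t;x)$ and use both transport representations:
\[
\abs{u^{\epsilon,1}(t,y)-u(t,y)}=\abs{u_0(x)-u(t,X^{\epsilon}(t;x))}=\abs{u(t,X(t;x))-u(t,X^{\epsilon}(t;x))}\le L\,g(t).
\]
Next, estimate $g$: for each $x$, $\dot X^{\epsilon}(t;x)-\dot X(t;x)=(\eta_{\epsilon}\ast F'(u^{\epsilon,1}))(t,X^{\epsilon})-F'(u(t,X))$, which I split as
\[
\underbrace{\eta_{\epsilon}\ast\big(F'(u^{\epsilon,1})-F'(u)\big)(t,X^{\epsilon})}_{(\mathrm{I})}+\underbrace{\big(\eta_{\epsilon}\ast F'(u)-F'(u)\big)(t,X^{\epsilon})}_{(\mathrm{II})}+\underbrace{F'(u(t,X^{\epsilon}))-F'(u(t,X))}_{(\mathrm{III})}.
\]
Using that $F'$ is $M$-Lipschitz and that convolution with $\eta_{\epsilon}$ does not increase the sup norm, $\abs{(\mathrm{I})}\le M\supnorm{u^{\epsilon,1}(t,\cdot)-u(t,\cdot)}\le ML\,g(t)$; since $F'\circ u(t,\cdot)$ is $ML$-Lipschitz in $x$ and $\eta_{\epsilon}$ is supported in $\{\abs{z}\le\epsilon\}$, $\abs{(\mathrm{II})}\le ML\epsilon$; and $\abs{(\mathrm{III})}\le M\abs{u(t,X^{\epsilon})-u(t,X)}\le ML\,g(t)$. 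Integrating in $t$ and taking the supremum over $x$ gives $g(t)\le\int_0^t\big(2ML\,g(s)+ML\epsilon\big)\,ds$, so Grönwall's inequality yields $g(t)\le\tfrac{\epsilon}{2}(e^{2MLt}-1)$, and therefore $\supnorm{u^{\epsilon,1}(t,\cdot)-u(t,\cdot)}\le\tfrac{L\epsilon}{2}(e^{2MLt}-1)\to0$ uniformly for $t\in[0,T]$, which is the asserted uniform convergence on $\Omega_T$.

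For \eqref{flux reg} only term $(\mathrm{I})$ changes, to $F'(\eta_{\epsilon}\ast u^{\epsilon,2})(t,X^{\epsilon})-F'(u(t,X^{\epsilon}))$, which I split as $\big(F'(\eta_{\epsilon}\ast u^{\epsilon,2})-F'(\eta_{\epsilon}\ast u)\big)(t,X^{\epsilon})+\big(F'(\eta_{\epsilon}\ast u)-F'(u)\big)(t,X^{\epsilon})$; the first piece is $\le M\supnorm{u^{\epsilon,2}(t,\cdot)-u(t,\cdot)}\le ML\,g(t)$ and the second is $\le M\abs{\eta_{\epsilon}\ast u-u}\le ML\epsilon$, so the identical Grönwall estimate applies. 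Finally, when $\tau<\infty$: almost every point of $\Omega_{\tau}=[0,\tau]\times\mathbb{R}^n$ has time coordinate $t<\tau$ and hence lies in some $\Omega_T$ with $T<\tau$, so $u^{\epsilon,j}\to u$ almost everywhere on $\Omega_{\tau}$; together with the uniform bound $\supnorm{u^{\epsilon,j}}\le\supnorm{u_0}$ this lets the dominated convergence theorem give $u^{\epsilon,j}\to u$ in $L^1$ on every compact subset of $\Omega_{\tau}$.

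The main obstacle to organise cleanly is the apparent circularity and its resolution by Grönwall: $\supnorm{u^{\epsilon}-u}$ is controlled by the characteristic displacement $g$, while $g'$ is in turn controlled by $\supnorm{u^{\epsilon}-u}$ (through $(\mathrm{I})$ and $(\mathrm{III})$) plus the one genuinely small term $(\mathrm{II})$, which is the sole source of the $O(\epsilon)$ rate and the sole place where the Lipschitz regularity of the local solution — hence the restriction $t<\tau$ — is used. A subsidiary point is to record carefully that the forward characteristic map of the local solution (for $t<\tau$) and the flow of the mollified velocity field are bijections of $\mathbb{R}^n$, so that the supremum over $x$ of the characteristic displacement genuinely controls $\supnorm{u^{\epsilon}-u}$; this is standard ODE theory given the space-Lipschitz bounds established in Section~\ref{c1wp}. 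Everything else is routine, in keeping with the intended simplicity of the argument.
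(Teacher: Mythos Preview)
Your argument is correct and is the ODE-level counterpart of the paper's PDE-level proof. The paper works directly with the difference $w^{\epsilon}=u-u^{\epsilon,1}$, observes that it satisfies the transport equation with source
\[
w^{\epsilon}_t+(\eta_{\epsilon}\ast F'(u^{\epsilon,1}))\cdot\nabla w^{\epsilon}=(\eta_{\epsilon}\ast(F'(u^{\epsilon,1})-F'(u)))\cdot\nabla u+(\eta_{\epsilon}\ast F'(u)-F'(u))\cdot\nabla u,
\]
and reads off $\supnorm{w^{\epsilon}(t)}\le\int_0^t\bigl(LM\supnorm{w^{\epsilon}(s)}+\epsilon L^2M\bigr)\,ds$ since the left-hand side is pure transport along the nonlocal characteristics; Gr\"onwall then gives $\supnorm{w^{\epsilon}}\le\epsilon L^2MTe^{LMT}$. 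You instead track the supremum displacement $g(t)$ between the local and nonlocal forward characteristics and couple it back to $\supnorm{w^{\epsilon}}$ via $\supnorm{w^{\epsilon}}\le Lg$. The two routes are equivalent through the method of characteristics: your terms $(\mathrm{I})$ and $(\mathrm{II})$ are exactly the paper's two source terms evaluated along $X^{\epsilon}$, while your extra term $(\mathrm{III})$ appears only because you have detoured through $X(t;x)$ to represent $u_0(x)$ --- the paper avoids it by writing $w$ along $X^{\epsilon}$ directly, which is why it gets the exponent $LM$ rather than your $2LM$. Both approaches use only the Lipschitz constant of the \emph{local} solution and never an $\epsilon$-uniform bound on $\nabla u^{\epsilon}$, and both handle the $\Omega_\tau$ statement by dominated convergence in the same way.
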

	\begin{proof}
		For $i=1,2$ define the sequences of functions $w^{\epsilon,i}$ on the domain $\Omega_T$ by
		\[
		w^{\epsilon,i}=u-u^{\epsilon,i}.
		\]
		Let us consider $i=1$; the other case is quite similar. Let $L$ be the uniform Lipschitz constant for $u$ on $\Omega_T$. From \eqref{velocity reg},\eqref{pwise scl}:
		\[
		\begin{split}
			w^{\epsilon,1}_t+(\eta_{\epsilon}\ast F^{\prime}(u^{\epsilon,1}))\cdot\nabla w^{\epsilon,1}&=(\eta_{\epsilon}\ast F^{\prime}(u^{\epsilon,1})-F^{\prime}(u))\cdot\nabla u \\
			&=(\eta_{\epsilon}\ast( F^{\prime}(u^{\epsilon,1})-F^{\prime}(u)))\cdot\nabla u+(\eta_{\epsilon}\ast F^{\prime}(u)-F^{\prime}(u))\cdot\nabla u.
		\end{split}
		\]
		The last equality is simply a transport equation with a source term, due to our regularity assumptions on the entropy solution. Since $f^{\prime}$ is uniformly Lipschitz, for $t<T$ we have
		\[
		\begin{split}
			\supnorm{w^{\epsilon,1}(\cdot,t)}&\leq\supnorm{w^{\epsilon,1}(\cdot,0)}+\int_0^tLM\supnorm{w^{\epsilon,1}(\cdot,s)}ds+\epsilon L^2Mt,
		\end{split}
		\]
		where $M$ is the Lipschitz constant of $f^{\prime}$. Hence, by Grönwall's inequality, we have that on the domain $\Omega_T$:
		\[
		\supnorm{w^{\epsilon,1}}\leq\epsilon L^2MTe^{LMT}.
		\]
		Thus, the nonlocal approximations converge uniformly to the Lipschitz entropy solution as the parameter $\epsilon$ goes to zero. By considering a monotonically increasing sequence of times $T_n$ converging to $\tau$ and uniform convergence on the domains $\Omega_{T_n}$, we also obtain the $L^1_{loc}$ convergence of non-local solutions to the local limit by Lebesgue's dominated convergence theorem. That is to say, for compact spatial domains $K\subset\mathbb{R}^n$, the nonlocal solutions converge to the local solution in $L^1([0,T]\times K)$. Thus, we can justify the formal singular limit up to the time of catastrophe, i.e., gradient blow-up.
	\end{proof}
	
	Once the entropy solution forms discontinuities, however, this argument based on the method of characteristics can no longer be extended. This motivates our analysis of solutions with discontinuous initial data. The simplest such case is the Riemann problem, which we shall turn to next.
	
	\subsection{The Riemann problem}
	We now arrive at our first main result tackling discontinuities, which shows both convergence and non-convergence to the local limit depending on the form of $u_0$. Furthermore, we shall see that the Burgers case really is special in allowing nonlocal-to-local convergence to the entropy solution in the presence of discontinuities.
	\begin{lemma}\label{Riemann Problem}
		Let $u_0(x)$ be a piece-wise constant function taking on exactly two values, i.e., suppose
		\begin{equation}\label{Riemann data}
			u_0(x)=
			\begin{cases}
				&u_L\text{ if } x\leq0, \\
				&u_R\text{ if }x>0.
			\end{cases}
		\end{equation}
		Then the unique BV solution $u^{\epsilon}$ to \eqref{nonconservative nonlocal} with initial data given by \eqref{Riemann data} is independent of $\epsilon$ and given by
		\begin{equation}\label{Riemann solution}
			u(t,x)=
			\begin{cases}
				&u_L\text{ if }x\leq\sigma t, \\
				&u_R\text{ if }x>\sigma t,
			\end{cases}
		\end{equation}
		where $\sigma=\frac{1}{2}(u_L+u_R)$ is the Rankine-Hugoniot shock speed for the Burgers equation. In particular, we have trivial convergence to the entropic weak solution for shock-type Riemann data $(u_L>u_R)$, and non-convergence for rarefaction-type initial data $(u_L<u_R)$.
	\end{lemma}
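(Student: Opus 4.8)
The plan is to verify directly that the function $u$ defined in \eqref{Riemann solution} is a weak solution of \eqref{nonconservative nonlocal} with the Riemann data \eqref{Riemann data}; uniqueness then follows from Lemma~\ref{L1 stability}, and the convergence/non-convergence assertions are immediate once the explicit form is established, by comparison with the known entropy solution of the Burgers equation (a shock travelling at speed $\sigma$ when $u_L>u_R$, a rarefaction fan when $u_L<u_R$). Since \eqref{Riemann solution} is a translate-in-time of a single jump, the key observation is that, by the symmetry of $\eta_\epsilon$ and the fact that $u(t,\cdot)$ is a Heaviside-type step of height $u_R-u_L$ located at $x=\sigma t$, the convolution $\eta_\epsilon\ast u(t,\cdot)$ is a smooth monotone profile that takes the value $u_L$ for $x\le \sigma t-\epsilon$, the value $u_R$ for $x\ge \sigma t+\epsilon$, and — crucially — the value $\tfrac12(u_L+u_R)=\sigma$ exactly at $x=\sigma t$, again by symmetry of $\eta_\epsilon$. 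Thus along the line $x=\sigma t$ the regularised velocity equals the propagation speed of the jump, which is exactly the consistency condition one needs.

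Concretely, I would proceed as follows. First, record that $u(t,\cdot)$ as in \eqref{Riemann solution} is in $BV(\mathbb R)$ with $\|u(t,\cdot)\|_{TV}=|u_R-u_L|$ for every $t$, and lies in $C([0,\infty);L^1_{loc})$, so it is an admissible candidate in the sense of the weak formulation. Second, compute $v_\epsilon(t,x):=\eta_\epsilon\ast u(t,\cdot)(x)$; writing $u(t,x)=u_L+(u_R-u_L)H(x-\sigma t)$ with $H$ the Heaviside function, one gets $v_\epsilon(t,x)=u_L+(u_R-u_L)\,\Theta_\epsilon(x-\sigma t)$ where $\Theta_\epsilon(y)=\int_{-\infty}^{y}\eta_\epsilon$ is the (smooth) mollified Heaviside, and symmetry of $\eta_\epsilon$ gives $\Theta_\epsilon(0)=\tfrac12$, hence $v_\epsilon(t,\sigma t)=\sigma$. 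Third, check the weak formulation
\[
\iint_{\mathbb{R}^2_+}u\big[\partial_t\varphi+\partial_x(\varphi v_\epsilon)\big]\,dx\,dt+\int_{\mathbb R}u_0\varphi(0,x)\,dx=0
\]
for all $\varphi\in C_c^\infty([0,\infty)\times\mathbb R)$. On the two open regions $\{x<\sigma t\}$ and $\{x>\sigma t\}$ the function $u$ is constant and smooth, so integrating by parts there produces only a boundary term along the interface $x=\sigma t$; that term is (up to sign) the jump $[u]$ times $\big(-\sigma+v_\epsilon(t,\sigma t)\big)$ integrated against $\varphi$ along the line, and this vanishes precisely because $v_\epsilon(t,\sigma t)=\sigma$. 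Hence $u$ is a weak solution. Fourth, invoke Lemma~\ref{L1 stability} (with $u_0=v_0$) to conclude it is the unique $BV$ weak solution; in particular it is independent of $\epsilon$.

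Finally, for the convergence statements: in the shock case $u_L>u_R$ the formula \eqref{Riemann solution} is literally the Lax entropy solution of \eqref{burger} (a single shock at Rankine–Hugoniot speed $\sigma$), so the $\epsilon$-independent family converges trivially to the entropy solution. In the rarefaction case $u_L<u_R$, the entropy solution of \eqref{burger} is the continuous rarefaction wave interpolating linearly between $u_L$ and $u_R$ on the fan $u_L\le x/t\le u_R$, which is manifestly different from the propagating discontinuity \eqref{Riemann solution}; hence $u^\epsilon\not\to u_{\mathrm{entropy}}$ (indeed $u^\epsilon$ does not even move, as a discontinuity, off the line $x=\sigma t$), giving non-convergence. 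The only subtle point — and the place where the argument could go wrong for a non-symmetric or non-Burgers model — is the identity $v_\epsilon(t,\sigma t)=\sigma$: it relies simultaneously on the symmetry of $\eta_\epsilon$ and on the speed $\sigma$ being the \emph{arithmetic mean} of $u_L,u_R$, which in turn uses that the Burgers flux derivative $f'(u)=u$ is linear, so that the Rankine–Hugoniot speed coincides with $f'\big(\tfrac{u_L+u_R}{2}\big)$. This is exactly why the Burgers case is special, and I would flag that this is the main obstacle to extending the lemma, anticipating the counterexamples for general fluxes alluded to later in the paper.
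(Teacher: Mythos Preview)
Your proposal is correct and follows essentially the same approach as the paper: define the candidate \eqref{Riemann solution}, observe from the symmetry of $\eta_\epsilon$ that $\eta_\epsilon\ast u(t,\sigma t)=\sigma$, integrate the weak formulation by parts over the two half-planes separated by $x=\sigma t$, and note that the resulting interface term vanishes precisely because the regularised velocity equals $\sigma$ there; uniqueness is then read off from Lemma~\ref{L1 stability}. Your explicit rewriting via the mollified Heaviside $\Theta_\epsilon$ and your remarks on why linearity of $f'$ and symmetry of the kernel are essential are minor elaborations, not a different strategy.
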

	\begin{proof}
		If $u_L=u_R$ there is nothing to prove, so we will assume that we are not dealing with the trivial case. As an ansatz, define $u(t,x)$ as in the theorem; we simply have to show that it is a weak solution. Let $\varphi\in C_c^{\infty}([0,\infty)\times\mathbb{R})$. Then, let $\Omega_L=\{(t,x):x\leq\sigma t\},\Omega_R=\{(t,x):x>\sigma t\}$; note that they partition $\Omega=[0,\infty)\times\mathbb{R}$. Also note that, by the symmetry of the mollifier, $\eta_{\epsilon}\ast u(t,\sigma t)=\sigma$. Hence,
		\[
		\begin{split}
			&\int_{0}^{\infty}\int_{-\infty}^{\infty}u[\partial_t\varphi+\partial_x(\varphi(\eta_{\epsilon}\ast u))]dxdt+\int_{-\infty}^{\infty}u_0(x)\varphi(0,x)dx \\
			=&\iint_{\Omega}u[\partial_t\varphi+\partial_x(\varphi(\eta_{\epsilon}\ast u))]dxdt+\int_{-\infty}^{\infty}u_0(x)\varphi(0,x)dx \\
			=&\iint_{\Omega_L}u_L(\partial_t\varphi+\partial_x(\varphi(\eta_{\epsilon}\ast u)))dxdt \\
			&+\iint_{\Omega_R}u_R(\partial_t\varphi+\partial_x(\varphi(\eta_{\epsilon}\ast u)))dxdt \\
			&+\int_{-\infty}^{\infty}u_0(x)\varphi(0,x)dx,
		\end{split}
		\]
		or,
		\[
		\begin{split}
			&\int_{0}^{\infty}\int_{-\infty}^{\infty}u[\partial_t\varphi+\partial_x(\varphi(\eta_{\epsilon}\ast u))]dxdt+\int_{-\infty}^{\infty}u_0(x)\varphi(0,x)dx \\
			=&\dfrac{1}{\sqrt{1+\sigma^2}}\int_{0}^{\infty}u_L[\sigma\varphi(t,\sigma t)-\varphi(t,\sigma t)(\eta_{\epsilon}\ast u(t,\sigma t))]dxdt \\
			&+\dfrac{1}{\sqrt{1+\sigma^2}}\int_{0}^{\infty}u_R[-\sigma\varphi(t,\sigma t)+\varphi(t,\sigma t)(\eta_{\epsilon}\ast u(t,\sigma t))](t,x)dt \\
			=&\dfrac{1}{\sqrt{1+\sigma^2}}\int_{0}^{\infty}(u_L-u_R)\varphi(t,\sigma t)[\sigma-\eta_{\epsilon}\ast u(t,\sigma t)]dt \\
			=&0.
		\end{split}
		\]
		Since $\varphi$ is arbitrary, this shows that the solution to \eqref{nonconservative nonlocal} with initial data \eqref{Riemann data} is indeed \eqref{Riemann solution}, and it is the unique BV solution by Lemma~\ref{L1 stability}.
	\end{proof}
	Note that the particular form of the Burgers flux was crucial to the proof, and it is easy to see that convergence to the entropy solution does not hold in general even for the Riemann problem if we consider regularisations like \eqref{velocity reg} or \eqref{flux reg}. Thus we have the following lemma for the generic Riemann problem with respect to our non-conservative, non-local equation.
	\begin{lemma}\label{general riemann}
		Let $u^{\epsilon,1},u^{\epsilon,2}$ denote solutions to the Cauchy problem(s) \eqref{velocity reg} and \eqref{flux reg} respectively, with initial data
		\[
		u_0(x)=
		\begin{cases}
			&u_L\text{ if } x\leq0, \\
			&u_R\text{ if }x>0.
		\end{cases}
		\]
		Then the solution(s) are independent of $\epsilon>0$ and given by
		\[
		u^{\epsilon,i}(t,x)=
		\begin{cases}
			&u_L\text{ if }x\leq\sigma_i t, \\
			&u_R\text{ if }x>\sigma_i t,
		\end{cases}
		\]
		where
		\[
		\sigma_i=
		\begin{cases}
			&\dfrac{f^{\prime}(u_L)+f^{\prime}(u_R)}{2}\text{ if }i=1, \\
			&f^{\prime}\left(\dfrac{u_L+u_R}{2}\right)\text{ if }i=2.
		\end{cases}
		\]
	\end{lemma}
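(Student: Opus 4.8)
The plan is to mimic the proof of Lemma~\ref{Riemann Problem} almost verbatim, the only genuinely new ingredient being the correct value of the regularised velocity along the discontinuity. First I record the weak formulations of \eqref{velocity reg} and \eqref{flux reg}: proceeding exactly as in the derivation preceding the definition of weak solution for \eqref{nonconservative nonlocal} (multiply the quasilinear form by a test function, integrate, and push all derivatives onto the test function), a weak solution of \eqref{velocity reg} is a $u^{\epsilon,1}\in C([0,\infty);L^1_{loc}(\mathbb{R}))$ with
\[
\iint_{\mathbb{R}^2_+}u^{\epsilon,1}\Big[\partial_t\varphi+\partial_x\big(\varphi\,(\eta_\epsilon\ast f'(u^{\epsilon,1}))\big)\Big]\,dx\,dt+\int_{\mathbb{R}}u_0(x)\varphi(0,x)\,dx=0
\]
for all $\varphi\in C_c^\infty([0,\infty)\times\mathbb{R})$, and a weak solution of \eqref{flux reg} is defined identically but with $\eta_\epsilon\ast f'(u^{\epsilon,2})$ replaced by $f'(\eta_\epsilon\ast u^{\epsilon,2})$. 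The well-posedness and $L^1$-stability theory of Section~\ref{weaksol}, in particular the analogue of Lemma~\ref{L1 stability}, carries over to these equations mutatis mutandis, so it suffices to exhibit one $BV$ weak solution of each; any such solution is then necessarily \emph{the} solution.

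The ansatz is the travelling step $u(t,x)=u_L$ for $x\le\sigma_i t$, $u(t,x)=u_R$ for $x>\sigma_i t$, which lies in $C([0,\infty);L^1_{loc}(\mathbb{R}))$ and is locally of bounded variation. The key observation is that, since $u$ takes only the two values $u_L,u_R$, the function $x\mapsto f'(u(t,x))$ is itself a step with a single jump at $\sigma_1 t$ between $f'(u_L)$ and $f'(u_R)$, whence the symmetry of $\eta_\epsilon$ gives
\[
(\eta_\epsilon\ast f'(u))(t,\sigma_1 t)=\frac{f'(u_L)+f'(u_R)}{2}=\sigma_1\qquad\text{for every }t\ge0 ,
\]
and likewise $(\eta_\epsilon\ast u)(t,\sigma_2 t)=\frac{1}{2}(u_L+u_R)$ by symmetry, so $f'\big((\eta_\epsilon\ast u)(t,\sigma_2 t)\big)=f'\big(\frac{u_L+u_R}{2}\big)=\sigma_2$. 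Thus in either regularisation the relevant smooth velocity field, evaluated along the discontinuity line, equals exactly the postulated speed; its values elsewhere are irrelevant because $\partial_x u$ is a measure supported on $\{x=\sigma_i t\}$. These averages are plainly $\epsilon$-independent, which is what yields the $\epsilon$-independence asserted in the statement.

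It then remains to split $\mathbb{R}^2_+$ into $\Omega_L=\{x\le\sigma_i t\}$ and $\Omega_R=\{x>\sigma_i t\}$, on each of which $u$ is constant, and apply the Gauss--Green theorem to the vector field $(\varphi,\varphi v)$, with $v$ the respective velocity (smooth in $(t,x)$, since it is a fixed smooth profile of $x-\sigma_i t$). The $\{t=0\}$ portions of $\partial\Omega_L$ and $\partial\Omega_R$ contribute exactly $-\int_{\mathbb{R}}u_0(x)\varphi(0,x)\,dx$, cancelling the initial-data term, while the two integrals along $\{x=\sigma_i t\}$ combine, after accounting for outward normals and line elements as in Lemma~\ref{Riemann Problem}, into
\[
(u_L-u_R)\int_0^\infty\varphi(t,\sigma_i t)\,\big(v(t,\sigma_i t)-\sigma_i\big)\,dt,
\]
which vanishes by the previous paragraph. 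As $\varphi$ is arbitrary, the ansatz is a weak solution and hence the unique $BV$ one. I do not anticipate a genuine obstacle: the only place requiring care is the normal/line-element bookkeeping in the Gauss--Green step, together with the (immediate) observation that $v(t,\sigma_i t)=\sigma_i$ holds already for arbitrarily small $t>0$, because the ansatz is the exact step rather than a mollified one, so at the jump the convolution always samples the two constant states in equal proportion.
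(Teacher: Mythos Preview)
Your proposal is correct and follows precisely the route the paper intends: the paper itself states only that ``the proof is trivially similar to that of Lemma~\ref{Riemann Problem}, and therefore will not be repeated,'' and your write-up is exactly that argument, with the single genuinely new ingredient---the computation of $(\eta_\epsilon\ast f'(u))(t,\sigma_1 t)=\sigma_1$ and $f'((\eta_\epsilon\ast u)(t,\sigma_2 t))=\sigma_2$ by symmetry---correctly identified and justified. Your Gauss--Green bookkeeping is in fact slightly cleaner than the paper's own version in Lemma~\ref{Riemann Problem} (where a stray $1/\sqrt{1+\sigma^2}$ survives that should have cancelled against the line element), so no obstacle arises.
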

	The proof is trivially similar to that of Lemma~\ref{Riemann Problem}, and therefore will not be repeated. These two lemmas suggest the following:
	\begin{enumerate}
		\item For fluxes $f$ such that $f^{\prime}$ is non-linear, we cannot expect to justify the singular limit of either \eqref{velocity reg} or \eqref{flux reg} outside domains where the entropy solution itself is smooth.
		\item Even for linear fluxes (i.e. the Burgers equation and its rescaled variants), we cannot expect convergence if the initial data contains a positive jump discontinuity, since the entropy conditions are violated across the jump.
	\end{enumerate}
	
	\subsection{Piece-wise Lipschitz increasing data}
	For $C^1$, Lipschitz initial data, we can differentiate \eqref{nonconservative nonlocal} to obtain the following transport equation for the spatial derivative $\partial_xu^{\epsilon}=v^{\epsilon}$:
	\begin{equation}\label{nonlocal derivative}
		\partial_tv^{\epsilon}+(\eta_{\epsilon}\ast u^{\epsilon})\partial_xv^{\epsilon}=-v^{\epsilon}\partial_x(\eta_{\epsilon}\ast u^{\epsilon}).
	\end{equation}
	Now, if $u^{\epsilon}(x+\delta,0)>u^{\epsilon}(x,0)$ for all $\delta>0$ small enough, then we also have that $u^{\epsilon}(x+\delta^{\prime},t)>u^{\epsilon}(x,t)$ for all $\delta^{\prime}>0$ small enough. That is to say, $v^{\epsilon}$ does not change sign. Hence, along a characteristic $\gamma^{\epsilon}(t)$, we have that $v^{\epsilon}(t,\gamma^{\epsilon}(t))$ is decreasing in time if it is positive, unless $\partial_x(\eta_{\epsilon}\ast u^{\epsilon})(t,\gamma^{\epsilon}(t))=\eta_{\epsilon}\ast v^{\epsilon}(t,\gamma^{\epsilon}(t))<0$.
	
	In particular, for Lipschitz monotone increasing initial data, the Oleinik condition is satisfied and the non-local solutions $u^{\epsilon}$ converge to the entropy solution in $L^1$ on all compact subsets of spacetime. However, we already saw this in Theorem~\ref{smoothconv}. Let us now consider the case of piece-wise Lipschitz increasing data to generalise this idea.
	
	Consider initial data $u_0\in BV(\mathbb{R})$ such that, for $a_1,a_2\in\mathbb{R},a_1<a_2$:
	\[
	u_0(x)=
	\begin{cases}
		&v_0(x),\text{ if }x< a_1, \\
		&v_1(x)\text{ if }x\in[a_1,a_2), \\
		&v_2(x)\text{ if }x\geq a_2,
	\end{cases}
	\]
	where $v_i(x)$ are uniformly Lipschitz increasing functions with constant $C$, and the jump discontinuities satisfy
	\[
	u_0(a_1-)>u_0(a_1+),u_0(a_2-)>u_0(a_2+),
	\]
	with $u_0(x\pm)$ denoting the left and right limits respectively. Note that if the inequalities are not strict, then we are reduced to the Lipschitz increasing case we already covered earlier. In case where one of the inequalities is strict but not the other, we can redefine the other point to be be much further away without loss of generality and for each finite time horizon, perform the following analysis trivially, as we shall see.

	Let $\gamma^{\epsilon}_{j}(t)$ denote the characteristic curves of the Cauchy problem \eqref{nonconservative nonlocal} with initial data $u_0$ as above. Then, from \eqref{nonlocal derivative}, we have that $u^{\epsilon}(\cdot,t)$ is monotone increasing for values in each of the intervals $(-\infty,\gamma^{\epsilon}_1(t)),(\gamma^{\epsilon}_1(t),\gamma^{\epsilon}_2(t)),(\gamma^{\epsilon}_2(t),\infty)$ separately.
	
	Recall that $u^{\epsilon}$ for any sequence of $\epsilon\to0$ is precompact in $L^1_{loc}$, and every convergent subsequence converges to a weak solution. Furthermore, the curves $\gamma^{\epsilon}_i$ are uniformly Lipschitz, hence we can extract subsequences converging uniformly on compact intervals of time. Fix $T>0$, and consider a sequence, still denoted by $\epsilon$, such that $\gamma^{\epsilon}_{i}\to\gamma_{j}$ uniformly on $[0,T]$. We can choose $T$ small enough such that the curves $\gamma_{i}$ do not meet.
	
	Since the initial jumps are entropic, we only need to show that the increasing parts of the limiting weak solution $u$ between the two curves $\gamma_i$ satisfies the Oleinik condition
	\[
	\partial_xu\leq C.
	\]
	In order to prove this, we use the transport formulation for the derivative at the non-local level, and transfer the property onto the limit.
	
	Let $\delta>0$, with $\delta<\frac{1}{2}\min{\gamma_2-\gamma_1}$, where the minimum is taken over $[0,T]$. It is enough to show that for each such $\delta>0$, the non-local solution $u^{\epsilon}$ satisfies $\partial_{x}u^{\epsilon}(x,t)\leq C$ for $x$ such that $\min{\abs{x-\gamma_{i}(t)}}>\delta$. Without loss of generality, it is enough to show this for values of $x$ lying between the curves $\gamma_i$. Thus, let $\epsilon>0$ be small enough such that $\epsilon<\frac{\delta}{4}$, and
	\[
	\pnorm{L^{\infty}(0,T)}{\gamma_{i}-\gamma_{i}^{\epsilon}}<\dfrac{\delta}{4}
	\]
	for $i=1,2$. Define the domain
	\[
	\mathscr{D}^{\epsilon}_{\delta}=\{(x,t)|t\in[0,T],\gamma_1^{\epsilon}(t)+\epsilon\leq x\leq\gamma_2^{\epsilon}(t)-\epsilon\},
	\]
	which, by our choice of $\epsilon$, contains our set of interest
	\[
	\mathscr{D}_{\delta}=\{(x,t)|t\in[0,T],\gamma_1(t)+\delta\leq x\leq\gamma_2(t)-\delta\}.
	\]
	Since the support of the mollifier $\eta_{\epsilon}$ is $[-\epsilon,\epsilon]$, we have that
	\[
	\partial_x(\eta_{\epsilon}\ast u^{\epsilon})=\eta_{\epsilon}\ast(\partial_xu^{\epsilon})>0
	\]
	on the set $\mathscr{D}^{\epsilon}_{\delta}$. Hence, as long as a characteristic stays in $\mathscr{D}^{\epsilon}_{\delta}$, the spatial derivative along the characteristic is decreasing. Since the initial value of the spatial derivative is bounded above by $C$, we are done if we can show that characteristics do not enter the domain at positive time, since characteristics `fill up' the entire spacetime domain.
	
	Note that the lateral boundaries of $\mathscr{D}^{\epsilon}_{\delta}$ are given by the curves
	\[
	\theta_i^{\epsilon}(t)=\gamma_{i}^{\epsilon}(t)-(-1)^{i}\epsilon
	\]
	for $i=1,2$. Hence, we have that
	\[
	\dot{\theta}_i^{\epsilon}(t)=\dot{\gamma}_{i}^{\epsilon}(t)
	\]
	for $i=1,2$. However,
	\[
	\eta_{\epsilon}\ast u^{\epsilon}(\gamma_1^{\epsilon}(t)+\epsilon,t)<\eta_{\epsilon}\ast u^{\epsilon}(\gamma_1^{\epsilon}(t),t)=\dot{\gamma}_1^{\epsilon}(t)
	\]
	since the jump is entropic, and the term on the left is nothing but the speed of the characteristic passing through the point $(\gamma_1^{\epsilon}(t)+\epsilon)$. Hence, the characteristic cannot be entering the domain $\mathscr{D}^{\epsilon}_{\delta}$ at any positive time. A similar analysis can be carried out for the other boundary curve, which concludes the proof.
	
	Note that for values of $x$ outside the region $\mathscr{D}_{\delta}$, we can carry out a similar analysis looking only at one boundary. Since this is essentially a local result based on analysis of characteristic curves, we can generalise the convergence result to include all initial data that are piece-wise Lipschitz increasing, by extracting further subsequences if necessary and as long as the curves of discontinuity are uniformly apart from each other. Thus we have the following theorem.
	\begin{theorem}\label{pwise lip inc}
		Let $u_0(x)\in BV(\mathbb{R})$ be such that, for real numbers $a_1<a_2<\ldots<a_n$ and uniformly Lipschitz increasing functions $v_0,\ldots,v_n$ with $v_k(a_{k+1})>v_{k+1}(a_{k+1})$:
		\[
		u_0(x)=
		\begin{cases}
			v_0(x),\text{ if }x<a_1, \\
			v_i(x),\text{ if }x\in[a_i,a_{i+1}), 1\leq i\leq n-1, \\
			v_n(x)\text{ if }x\geq a_n.
		\end{cases}
		\]
		Then, there is a positive time $T^{\ast}$ depending on $\{a_k\},\supnorm{u_0}$ such that the non-local solutions $u^{\epsilon}$ of the Cauchy problem \eqref{nonconservative nonlocal} with initial data $u_0$ converge to the entropy solution $u$ of \eqref{burger} with the same initial data.
	\end{theorem}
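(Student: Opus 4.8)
The plan is to localise the two-jump analysis already carried out above to each of the $n$ shock curves and to each Lipschitz-increasing panel between consecutive curves, and then to patch the resulting one-sided bounds. First I would collect the uniform-in-$\epsilon$ ingredients. By Lemma~\ref{TVP} and Lemma~\ref{Lipschitz L1} the family $\{u^\epsilon\}$ is bounded in $BV$ uniformly in $\epsilon$ and uniformly Lipschitz in time with respect to $\|\cdot\|_{L^1}$, hence precompact in $C([0,T];L^1_{loc}(\mathbb{R}))$, and every subsequential limit is a weak solution of \eqref{burger} (as recalled above, following \cite{NLtransport}). The maximum principle bounds the characteristic speed $\|\eta_\epsilon\ast u^\epsilon\|_\infty$ by $\|u_0\|_\infty$, so the jump curves $\gamma_k^\epsilon$ emanating from $a_k$ are Lipschitz with constant $\|u_0\|_\infty$, and along any fixed subsequence they converge uniformly on $[0,T]$ (Arzelà--Ascoli) to Lipschitz curves $\gamma_k$. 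Choosing
\[
T^\ast < \frac{1}{2\|u_0\|_\infty}\min_{1\le k\le n-1}(a_{k+1}-a_k)
\]
guarantees that the $\gamma_k$ stay pairwise disjoint on $[0,T^\ast]$; this is exactly where the stated dependence of $T^\ast$ on $\{a_k\}$ and $\|u_0\|_\infty$ enters, and it rules out two shocks merging into a possibly non-entropic jump inside the time horizon.

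Next, on a fixed subsequence and for small $\delta>0$, I would run the $\mathscr{D}^\epsilon_\delta$ argument panel by panel (including the two unbounded end panels, which have only one lateral boundary). On the open panel bounded by $\gamma_k^\epsilon,\gamma_{k+1}^\epsilon$ the datum is non-decreasing, and since \eqref{nonlocal derivative} transports the sign of $\partial_x u^\epsilon$ along characteristics, $v^\epsilon:=\partial_x u^\epsilon\ge 0$ there for all $t\in[0,T^\ast]$ (after the usual mollification of the merely-$BV$ datum to make this pointwise, followed by passage to the limit). Fixing $\epsilon$ small enough that $\epsilon<\delta/4$ and $\|\gamma_k^\epsilon-\gamma_k\|_{L^\infty(0,T^\ast)}<\delta/4$, the convolution window never reaches across a jump on the $\epsilon$-shrunk panel $\mathscr{D}^\epsilon_\delta$, so $\eta_\epsilon\ast v^\epsilon\ge 0$ there, and along any characteristic staying in $\mathscr{D}^\epsilon_\delta$ the quantity $v^\epsilon(t,\gamma(t))$ is non-increasing, hence $\le C$. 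It remains to check that no characteristic enters $\mathscr{D}^\epsilon_\delta$ through its lateral boundaries $\theta_k^\epsilon=\gamma_k^\epsilon+\epsilon$ and $\theta_{k+1}^\epsilon=\gamma_{k+1}^\epsilon-\epsilon$: since $\dot\theta_k^\epsilon=\dot\gamma_k^\epsilon=\eta_\epsilon\ast u^\epsilon(\gamma_k^\epsilon,t)$ while the characteristic through $\theta_k^\epsilon$ travels at speed $\eta_\epsilon\ast u^\epsilon(\gamma_k^\epsilon+\epsilon,t)$, the inequality $\eta_\epsilon\ast u^\epsilon(\gamma_k^\epsilon+\epsilon,t)<\eta_\epsilon\ast u^\epsilon(\gamma_k^\epsilon,t)$, valid for $\epsilon$ small because the jump at $\gamma_k^\epsilon$ is downward (the window centred at $\gamma_k^\epsilon$ sees the high left state $v_{k-1}(a_k)$ on half its mass, the one centred at $\gamma_k^\epsilon+\epsilon$ sees only values near $v_k(a_k)$), shows the left boundary outruns the characteristic; the mirror inequality handles the right boundary. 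Thus every characteristic in $\mathscr{D}^\epsilon_\delta$ originates at $t=0$, where $\partial_x u^\epsilon\le C$, giving $\partial_x u^\epsilon\le C$ on $\mathscr{D}_\delta$ uniformly in the (small) $\epsilon$.

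Finally I would pass to the limit. The bound $\partial_x u^\epsilon\le C$ on $\mathscr{D}_\delta$ is stable under $L^1_{loc}$ convergence (it is the statement $\int u^\epsilon\,\partial_x\varphi\ge -C\int\varphi$ for $\varphi\ge 0$ supported in $\mathscr{D}_\delta$), so the subsequential limit $u$ satisfies $\partial_x u\le C$ on $\mathscr{D}_\delta$; letting $\delta\downarrow 0$ and ranging over all panels, $u$ is Lipschitz with one-sided constant $C$ off $\bigcup_k\gamma_k$. Across each $\gamma_k$ the jump of $u^\epsilon$ equals the fixed positive number $v_{k-1}(a_k)-v_k(a_k)$ for every $\epsilon$ and $t$, which persists in the limit, so the limiting jumps are downward, hence admissible shocks for the Burgers flux, with Rankine--Hugoniot speed $\tfrac12(v_{k-1}(a_k)+v_k(a_k))$ — also the limit of $\dot\gamma_k^\epsilon$, by symmetry of the mollifier. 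Oleinik's admissibility criterion thus holds on the smooth part and on the jump set, so $u$ is the unique entropy solution of \eqref{burger} with datum $u_0$ on $\Omega_{T^\ast}$. Since this applies to every subsequence and the entropy solution is unique, the whole family $u^\epsilon$ converges to it, in $L^1_{loc}$ on $[0,T^\ast]\times\mathbb{R}$ (and uniformly on compact sets away from the shock curves).

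I expect the main obstacle to be the uniform ``no characteristic enters $\mathscr{D}^\epsilon_\delta$'' step: turning the heuristic comparison of the mollified velocities at $\gamma_k^\epsilon$ and at $\gamma_k^\epsilon\pm\epsilon$ into a genuine inequality valid for all $t\le T^\ast$ and all sufficiently small $\epsilon$, uniformly in $k$. Tied to this is the bookkeeping needed so that the simultaneous subsequence extraction for all $n$ curves, the mollification of the $BV$ datum, and the pointwise use of \eqref{nonlocal derivative} on each panel interact cleanly; everything else is a routine repetition of the two-jump argument already spelled out.
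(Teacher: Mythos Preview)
Your proposal is correct and follows essentially the same route as the paper: precompactness from the uniform $TV$ and $L^1$-Lipschitz bounds, Arzel\`a--Ascoli on the jump curves, a choice of $T^\ast$ keeping the curves separated, the $\mathscr{D}^\epsilon_\delta$ argument from the preceding two-jump analysis run panel by panel to obtain the one-sided Lipschitz bound, and then the subsequence-of-a-subsequence trick against uniqueness of the entropy solution. The paper's actual proof is much terser---it simply points back to the two-jump computation and notes that it localises---whereas you spell out the passage to the limit (weak stability of $\partial_x u\le C$, persistence and admissibility of the jumps) in more detail; but the skeleton is identical.
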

	\begin{proof}
		Since the characteristics propagate with finite speed, the curves of discontinuity stay apart for some finite time. In particular, we can bound this `secondary catastrophe' time from below by $\frac{1}{2}\supnorm{u_0}D$, where
		\[
		D=\min\{a_k-a_{k-1};k=2,\ldots,n\}.
		\]
		For every convergent subsequence of $u^{\epsilon}$, we can, by extracting further subsequences if necessary, obtain a sequence whose limit satisfies the Oleinik condition by the above argument. Furthermore, we know that the limit must be a weak solution of \eqref{burger}. Hence, in particular every subsequence has a further subsequence converging to the \textit{unique} entropy solution of \eqref{burger} with initial data $u_0$, and thus we conclude that the every sequence of $u^{\epsilon}$ as $\epsilon\to0$ converges to the entropy solution.
	\end{proof}
	Even beyond this `secondary catastrophe' time, the results of Coron et. al. \cite{NLtransport} show that every converging subsequence converges to a \textit{weak} solution, but it is still open as to whether this limit is still the entropy solution, or even whether a unique limit exists at all.
	
	\section{Extension to the isentropic Euler system}\label{isen}
	The isentropic Euler equations with pressure law $p(\rho)=\rho^3/3$ allows for a natural extension of our inviscid regularisation schema due to the structure of the associated Riemann invariants and eigenvalues. The equations of gas dynamics for a pressure law as above can be written in conservative form as follows:
	\begin{equation}\label{isentropic euler}
		\begin{split}
			\partial_t\rho+\partial_x(\rho v)&=0, \\
			\partial_t(\rho v)+\partial_x\left(\rho v^2+\dfrac{\rho^3}{3}\right)&=0.
		\end{split}
	\end{equation}
	The eigenvalues of this system are given by $v\pm\rho$, and the Riemann invariants are given by $\rho\pm v$, and hence for classical solutions we have that
	\begin{equation}\label{riemann invariants}
		\begin{split}
			(\rho+v)+(\rho+v)\partial_x(\rho+v)&=0, \\
			(\rho-v)-(\rho+v)\partial_x(\rho-v)&=0.
		\end{split}
	\end{equation}
	Note that this is similar in form to Burgers equation in non-conservative form. Thus, let $\mu=\rho+u,\lambda=\rho-\mu$, and consider the analogous inviscid regularisation of these equations:
	\begin{equation}\label{reg riemann}
		\begin{split}
			\partial_t\mu^{\epsilon}+(\eta_{\epsilon}\ast\mu^{\epsilon})\partial_x\mu^{\epsilon}&=0, \\
			\partial_t\lambda^{\epsilon}-(\eta_{\epsilon}\ast\lambda^{\epsilon})\partial_x\lambda^{\epsilon}&=0.
		\end{split}
	\end{equation}
	Since we have decomposed the system of conservation laws into a pair of decoupled scalar equations, we can extend Theorem~\ref{smoothconv} to the isentropic system \eqref{isentropic euler}. Note that as we are assuming a classical regime, we can equivalently work with Riemann invariants instead of the conserved quantities.
	
	\section{Conclusion}\label{conclusion}
	In contrast to the standard conservative method for non-local regularisation of scalar conservation laws, we have demonstrated here the merits of an explicitly non-conservative approach, showcasing the strong maximum principle and total variation-preserving properties of \eqref{nonconservative nonlocal}. In the smooth regime, and therefore locally-in-time for any Lipschitz initial data, our a priori estimates help us justify the formal limit of the non-local equations as $\epsilon\to0^+$. In a sense, this result is also sharp, since for even the simplest cases of discontinuous initial data, convergence to the entropy solution does not hold in general, except in particular for regularisations of the Burgers equation.
	
	One interesting question that we were unfortunately not able to resolve was that of the singular limit for Lipschitz initial data beyond the catastrophe time. Since non-entropic shocks cannot form at positive times, the Riemann-type counterexamples do not pose an obstacle to convergence. However, we were not able to justify the positive result. Of course, by total variation preservation, the sequence $u^{\epsilon}$ has an $L^1$-convergent subsequence on compact subsets of $\Omega_T$ for every $T$, but it is not clear whether the limit is always the appropriate entropy solution.
	
	It is easy to see that for any initial data with a rarefaction-type discontinuity, the nonlocal-to-local limit does not hold. However, this failure of convergence is not so obvious if the data does not have any positive jumps. Adapting a counter-example from \cite{counters}, we can show that our schema \eqref{nonconservative nonlocal} satisfies the nonlocal-to-local limit in some cases where the conservative regularisation \eqref{conservative nonlocal} fails to do so. In particular, consider initial data $u_0$ such that
	\[
	u_0(x)=
	\begin{cases}
		0&\text{ if }\abs{x}\geq2, \\
		-\operatorname{sgn}(x)&\text{ if }\abs{x}\leq1, \\
	\end{cases}
	\]
	and Lipschitz monotone non-decreasing on $(-\infty,0)\cup(0,\infty)$. Note that such a function $u_0$ cannot be quasiconcave either. For the conservative regularisation \eqref{conservative nonlocal}, the nonlocal-to-local limit does not hold, even weakly, for any time interval. However, by construction, $u_0$ is a piece-wise Lipschitz increasing function with entropic discontinuity. Hence, by Theorem~\ref{pwise lip inc} we have convergence to the entropy solution in the singular limit with respect to the non-conservative non-local regularisation \eqref{nonconservative nonlocal}. Since there is only one discontinuity, this convergence is in fact global in time.
	
	It would be interesting to demonstrate that the nonlocal-to-local limit \textit{always} holds, provided the initial data does not contain any positive jumps, or construct a counterexample to convergence. However, as we can see the standard counterexamples for symmetric convolutional kernels from the theory of conservative nonlocal equations do not carry over. Furthermore, even in the presence of positive jumps, one could have convergence to a \textit{unique} non-entropic weak solution, as the Riemann data shows. A precise characterisation of the singular limits would be quite valuable.
	
	\section{Acknowledgements}
	The first two authors would like to thank the Department of Atomic Energy, Government of India, for their support under project no. 12-R\&D-TFR-5.01-0520. E.W. acknowledges funding by the Deutsche Forschungsgemeinschaft (DFG,
	German Research Foundation) within SPP 2410, project number 525716336.
	
	\bibliographystyle{plain}
	\bibliography{citations}

\begin{thebibliography}{10}

\bibitem{opform}
G.~Aletti, G.~Naldi, and G.~Toscani.
\newblock First‐order continuous models of opinion formation.
\newblock {\em SIAM Journal on Applied Mathematics}, 67(3):837--853, 2007.

\bibitem{boundary}
A.~Bayen, J.-M. Coron, N.~De~Nitti, A.~Keimer, and L.~Pflug.
\newblock Boundary controllability and asymptotic stabilization of a nonlocal
  traffic flow model.
\newblock {\em Vietnam Journal of Mathematics}, 49(3):957--985, Sep 2021.

\bibitem{multilane}
A.~Bayen, J.~Friedrich, A.~Keimer, L.~Pflug, and T.~Veeravalli.
\newblock Modeling multilane traffic with moving obstacles by nonlocal balance
  laws.
\newblock {\em SIAM Journal on Applied Dynamical Systems}, 21(2):1495--1538,
  2022.

\bibitem{sed}
F.~Betancourt, R.~Bürger, K.~H. Karlsen, and E.~M. Tory.
\newblock On nonlocal conservation laws modelling sedimentation.
\newblock {\em Nonlinearity}, 24(3):855, feb 2011.

\bibitem{ccdnkp}
G.~M. Coclite, J.-M. Coron, N.~De~Nitti, A.~Keimer, and L.~Pflug.
\newblock A general result on the approximation of local conservation laws by
  nonlocal conservation laws: The singular limit problem for exponential
  kernels.
\newblock {\em Annales de l’Institut Henri Poincaré C, Analyse non
  linéaire}, 40(5):1205–1223, November 2022.

\bibitem{bvkernel}
G.~M. Coclite, N.~De~Nitti, A.~Keimer, and L.~Pflug.
\newblock On existence and uniqueness of weak solutions to nonlocal
  conservation laws with {BV} kernels.
\newblock {\em Zeitschrift f{\"u}r angewandte Mathematik und Physik},
  73(6):241, Oct 2022.

\bibitem{anisotropic}
M.~Colombo, G.~Crippa, E.~Marconi, and L.~V. Spinolo.
\newblock Local limit of nonlocal traffic models: Convergence results and total
  variation blow-up.
\newblock {\em Annales de l'Institut Henri Poincaré C, Analyse non linéaire},
  38(5):1653--1666, 2021.

\bibitem{tvblowup}
M.~Colombo, G.~Crippa, E.~Marconi, and L.~V. Spinolo.
\newblock Local limit of nonlocal traffic models: Convergence results and total
  variation blow-up.
\newblock {\em Annales de l'Institut Henri Poincaré C, Analyse non linéaire},
  38(5):1653--1666, 2021.

\bibitem{counters}
M.~Colombo, G.~Crippa, and L.~V. Spinolo.
\newblock On the singular local limit for conservation laws with nonlocal
  fluxes.
\newblock {\em Archive for Rational Mechanics and Analysis}, 233(3):1131--1167,
  Sep 2019.

\bibitem{control}
J.-M. Coron and Z.~Wang.
\newblock Controllability for a scalar conservation law with nonlocal velocity.
\newblock {\em Journal of Differential Equations}, 252(1):181--201, 2012.

\bibitem{NLtransport}
Jean-Michel Coron, Alexander Keimer, and Lukas Pflug.
\newblock Nonlocal transport equations---existence and uniqueness of solutions
  and relation to the corresponding conservation laws.
\newblock {\em SIAM Journal on Mathematical Analysis}, 52:5500--5532, 10 2020.

\bibitem{Crippa2013}
G.~Crippa and M.~L{\'e}cureux-Mercier.
\newblock Existence and uniqueness of measure solutions for a system of
  continuity equations with non-local flow.
\newblock {\em Nonlinear Differential Equations and Applications NoDEA},
  20(3):523--537, Jun 2013.

\bibitem{NLsource}
Camillo De~Lellis, Piotr Gwiazda, and Agnieszka {\'{S}}wierczewska-Gwiazda.
\newblock Transport equations with integral terms: existence, uniqueness and
  stability.
\newblock {\em Calculus of Variations and Partial Differential Equations},
  55(5):128, Oct 2016.

\bibitem{transport}
R.~J. DiPerna and P.-L. Lions.
\newblock Ordinary differential equations, transport theory and sobolev spaces.
\newblock {\em Inventiones mathematicae}, 98(3):511--547, Oct 1989.

\bibitem{gottlich}
J.~Friedrich, S.~G{\"o}ttlich, A.~Keimer, and L.~Pflug.
\newblock Conservation laws with nonlocal velocity: The singular limit problem.
\newblock {\em SIAM J. Appl. Math.}, 84:497--522, 2022.

\bibitem{lwrnonlocal}
J.~Friedrich, O.~Kolb, and S.~Göttlich.
\newblock A $\text{G}$odunov type scheme for a class of $\text{LWR}$ traffic
  flow models with non-local flux.
\newblock {\em Networks and Heterogeneous Media}, 13(4):531--547.

\bibitem{lwrnonlocalwp}
P.~Goatin and S.~Scialanga.
\newblock Well-posedness and finite volume approximations of the $\text{LWR}$
  traffic flow model with non-local velocity.
\newblock {\em Networks and Heterogeneous Media}, 11(1):107--121.

\bibitem{Hormander}
L.~Hormander.
\newblock {\em Lectures on nonlinear hyperbolic differential equations},
  page~1.
\newblock Math{\'e}matiques et Applications. Springer, Berlin, Germany, 1997
  edition, July 1997.

\bibitem{supply}
A.~Keimer, G.~Leugering, and T.~Sarkar.
\newblock Analysis of a system of nonlocal balance laws with weighted work in
  progress.
\newblock {\em Journal of Hyperbolic Differential Equations}, 2018.

\bibitem{survey}
A.~Keimer and L.~Pflug.
\newblock Existence, uniqueness and regularity results on nonlocal balance
  laws.
\newblock {\em Journal of Differential Equations}, 263(7):4023--4069, 2017.

\bibitem{monodatanonlocal}
A.~Keimer and L.~Pflug.
\newblock On approximation of local conservation laws by nonlocal conservation
  laws.
\newblock {\em Journal of Mathematical Analysis and Applications},
  475(2):1927--1955, 2019.

\bibitem{kruzkov}
S.~N. Kružkov.
\newblock First order quasilinear equations in several independent variables.
\newblock {\em Mathematics of the USSR-Sbornik}, 10(2):217, feb 1970.

\bibitem{Norgard_2008}
Greg Norgard and Kamran Mohseni.
\newblock A regularization of the burgers equation using a filtered convective
  velocity.
\newblock {\em Journal of Physics A: Mathematical and Theoretical},
  41(34):344016, aug 2008.

\bibitem{ConFil}
Greg Norgard and Kamran Mohseni.
\newblock On the convergence of the convectively filtered burgers equation to
  the entropy solution of the inviscid burgers equation.
\newblock 2009.

\bibitem{timecompactness}
J.~Simon.
\newblock Compact sets in the space ${L}^p(0,{T};{B})$.
\newblock {\em Annali di Matematica Pura ed Applicata}, 146:65--96, 1986.

\bibitem{Zum}
K.~Zumbrun.
\newblock On a nonlocal dispersive equation modeling particle suspensions.
\newblock {\em Quarterly of Applied Mathematics}, 57, 09 1999.

\end{thebibliography}
\end{document}